\newtheorem{theorem}{Theorem}
\newtheorem{lemma}[theorem]{Lemma}
\newtheorem{corollary}[theorem]{Corollary}
\newtheorem{fact}[theorem]{Fact}
\newtheorem{definition}[theorem]{Definition}
\newtheorem{observation}[theorem]{Observation}
\newcommand{\sentt}{Sent_{L_T}}
\newcommand{\sentpa}{Sent_{\lpa}}
\newcommand{\lt}{L_T}
\newcommand{\lpa}{L_{PA}}
\newcommand{\tria}{\vartriangleleft}
\newcommand{\trias}{\vartriangleleft^*}
\newcommand{\etrias}{\trianglelefteq^*}
\newcommand{\twk}{T^{WK}}
\newcommand{\tsk}{T^{SK}}
\newcommand{\omck}{\omega_1^{CK}}
\newenvironment{proof}{{\bf Proof.} }{$\Box$ \\}
\begin{document}

\title{Comparing truth theories based on Weak and Strong Kleene logic} 
\author{Cezary Cie\'sli\'nski\\ Institute of Philosophy, University of Warsaw\\ Poland
\\ \small e-mail:c.cieslinski@uw.edu.pl
\\ }
\date{}
\maketitle

\section{General background}
In the last decades various axiomatic theories of truth have been proposed in the literature.\footnote{See \citep{Hal11} for a comprehensive presentation.} One of the main research topics in the area has been that of assessing their strength. Ideally, such an assessment should permit us to \textit{compare} theories, that is, to answer questions of the form `Is a theory $Th_1$ stronger or weaker than $Th_2$?'. However, such comparisons are not always straightforward and in some cases even the measure of strength to be used is not obvious at all.

The simplest comparisons can be carried out in terms of inclusion. Thus, we can say that $Th_2$ is not weaker than $Th_1$ iff $Th_1 \subseteq Th_2$ ($Th_2$ is strictly stronger than $Th_1$ if the inclusion is proper).\footnote{For example, it can be easily verified that $UTB$ is strictly stronger in this sense than $TB$. Both $TB$ and $UTB$ are disquotational truth theories; for the exact definitions, see \cite[p. 53-54]{Hal11}. For the discussion of the strength of disquotational theories, see also \citep{Cie15}.} This, however, is a crude measure, as it does not permit us to compare theories for which no such simple inclusions hold. 

Axiomatic theories of truth discussed in the literature are often built over some arithmetical base theory playing the role of the theory of syntax. Such a perspective will be adopted also in this paper: all the truth theories which we discuss will be obtained by extending Peano arithmetic with some axioms employing the new predicate `$T(x)$' (the truth predicate). In view of this, another useful measure of strength, often discussed in the literature, is provided by comparing sets of \textit{arithmetical} consequences of theories. In effect, when employing it, we compare not truth theories taken as wholes but their arithmetical content only. From this point of view, for example, the classical compositional typed truth theory $CT^-$ turns out to be equally strong as $UTB$, since both theories have exactly the same arithmetical consequences even though their truth axioms are quite different.\footnote{Namely, both of them are conservative over Peano arithmetic. For the definition of $CT^-$ and its conservativity over $PA$, see \citep[p. 107ff]{Ciebook} (the proof presented there is an adaptation of the construction of \cite{EnVi15}).}

A still subtler measure of strength has been proposed by \citet{Fuji10}, who introduced the following notion of relative truth-definability. 

\begin{definition}\label{trdefina} Let $Th_1$ and $Th_2$ be theories in one and the same language $\lt$ containing the predicate `$T(x)$' called `the truth predicate'. We say that $Th_1$ is relatively truth-definable in $Th_2$ (or $Th_2$ defines the truth predicate of $Th_1$) iff there is a formula $\theta(x) \in \lt$ such that for every  $\psi \in \lt$, if $Th_1 \vdash \psi$, then $Th_2 \vdash \psi(\theta(x)/T(x))$.
\end{definition} 
The expression `$\psi(\theta(x)/T(x))$' stands for the result of substituting the formula $\theta$ for all the occurrences of `$T$' in $\psi$.

It is easy to observe that if $Th_2$ defines the truth predicate of $Th_1$, then $Th_2$ is arithmetically at least as strong as $Th_1$. However, truth-definability is a stricter relation and it gives us more than mere comparisons of arithmetical strength. When employing the notion of truth-definability, we take into account not only the arithmetical, but also the truth-theoretic content of theories. Indeed, if $Th_2$ defines the truth predicate of $Th_1$, then $Th_2$ contains the resources permitting us to reproduce the very notion of truth characterised by the axioms of $Th_1$. In effect, we can claim that in such a case $Th_2$ is at least as strong as $Th_1$ not just arithmetically but also \textit{conceptually}. \\

The aim of this paper is to compare the conceptual strength of two axiomatic theories of truth. The first of these is $KF$ - a theory designed to capture Kripke's fixed-point model construction based on the Strong Kleene evaluation schema. The second theory is $WKF$, which closely resembles $KF$, except that it is designed to capture the fixed-point construction based on the Weak Kleene evaluation schema. We assume that both theories are formulated in the language $\lt$, which is obtained from the arithmetical language by adding a new one-place predicate `$T(x)$'.\footnote{For a fuller discussion of the resources of the arithmetical part of $\lt$, we refer the reader to the final paragraphs of Section \ref{state}.} Both theories contain Peano arithmetic ($PA$) together with induction for the full language with the truth predicate.

For the list of axioms of $KF$, the reader is referred to \cite[p. 201]{Hal11}; following Halbach, we will denote them as {\scshape kf1--kf13}. Before defining the set of axioms of $WKF$, we adopt the following abbreviations.
\begin{itemize}
\item $\sentt$ is the set of sentences of $\lt$.
\item Let $\varphi \in \sentt$. Then `$D(\varphi)$' (`$x$ is determined') is the formula: `$T(\varphi) \vee T(\neg \varphi)$'.
\item Let $\varphi(x)$ be a formula of $\lt$ with one variable free. Then `$D(\varphi(x))$' is the formula: `$\forall x~\big[ T(\varphi(x)) \vee T(\neg \varphi(x)) \big]$'.
\item The expression `$D(\varphi, \psi)$' abbreviates `$D(\varphi) \wedge D(\psi)$'.
\end{itemize}
In Weak Kleene logic a compound formula will have a determinate truth value (truth or falsity) only if its constituents are also determined. This insight gives rise to the following axiomatisation of the truth theory $WKF$, based on Weak Kleene logic.

\begin{definition}\label{wkf} Axioms {\scshape wkf1--wkf4}, {\scshape wkf7--wkf8} and {\scshape wkf11--wkf13} of $WKF$ are exactly the same as {\scshape kf1--kf4, kf7--kf8} and {\scshape kf11--kf13}; see \citep[p. 201]{Hal11}. We list below only those axioms of $WKF$ which differ from the corresponding axioms of $KF$.

\begin{description}
\item[{\scshape wkf5}] $\forall \varphi, \psi \in \sentt \big[ T \big( \neg (\varphi \wedge \psi ) \big) \equiv \big( D(\varphi, \psi) \wedge (T(\neg \varphi) \vee T(\neg \psi)) \big) \big]$
\item[{\scshape wkf6}] $\forall \varphi, \psi \in \sentt \big[ T (\varphi \vee \psi ) \equiv \big( D(\varphi, \psi) \wedge (T(\varphi) \vee T(\psi)) \big) \big]$
\item[{\scshape wkf9}] $\forall \varphi(x) \in \lt \forall v \in Var \big[ T (\neg \forall v \varphi(v) ) \equiv \big( D(\varphi(x)) \wedge \exists t T(\neg \varphi(t)) \big) \big]$
\item[{\scshape wkf10}] $\forall \varphi(x) \in \lt \forall v \in Var \big[ T (\exists v \varphi(v) ) \equiv \big( D(\varphi(x)) \wedge \exists t T(\varphi(t)) \big) \big]$
\end{description}
\end{definition}

\section{State-of-the-art and motivations}\label{state}

Theories of truth based on Weak Kleene logic have been investigated in the literature from two angles. One important source is the paper of \cite{CaDa91}, which describes some striking differences between the Strong Kleene and the Weak Kleene evaluation schemata in Kripke's model-theoretic constructions (the paper does not discuss axiomatic truth theories). Another key source is \citep{Fuji10}, where the discussion focuses on axiomatic theories of truth, with several results about $WKF$ being presented. 

Starting from the model-theoretic approach, below we sketch the well-known constructions which $KF$ and $WKF$ have been designed to capture. In what follows $Q \in \{\exists, \forall \}$ and $\circ \in \{\wedge, \vee \}$. The expression `$Q_d$' (`$\circ_d$') stands for the dual quantifier (dual binary connective): $Q_d$ is $\exists$ if $Q$ is the universal quantifier and it is the universal quantifier otherwise (similarly, $\circ_d$ is either $\wedge$ or $\vee$ depending on what $\circ$ is). $Tm^c$ is the set of constant terms. The next two definitions characterise the notions of Strong Kleene and Weak Kleene Kripke's jump.

\begin{definition}[Strong Kleene Jump]\label{skjump} Let $S \subseteq \omega$. We define:
\begin{align}
J^{SK}(S) = &\ \ \{\ulcorner t = s \urcorner : val(t) = val(s) \}\nonumber\\
\cup&\ \ \{\ulcorner t \ne s \urcorner: val(t) \ne val(s) \}\nonumber\\
\cup&\ \ \{\ulcorner T(t) \urcorner: val(t) \in S \}\nonumber\\
\cup&\ \ \{\ulcorner \neg T(t) \urcorner: \neg val(t) \in S \vee \neg Sent_{\lt}(val(t))\}\nonumber\\
\cup&\ \ \{\ulcorner \varphi \circ \psi \urcorner: \varphi \in S \circ \psi \in S \}\nonumber\\
\cup&\ \ \{\ulcorner \neg (\varphi \circ \psi) \urcorner: \neg \varphi \in S \circ_d \neg \psi \in S \}\nonumber\\
\cup&\ \ \{\ulcorner Qv \varphi \urcorner: Qt \in Tm^c (\varphi(t) \in S) \}\nonumber\\
\cup&\ \ \{\ulcorner \neg Qv \varphi \urcorner: Q_d t \in Tm^c (\neg \varphi(t) \in S) \}.\nonumber\
\end{align} 
\end{definition}
For a set $S \subseteq \omega$ and $\varphi \in \sentt$, let `$D(S, \varphi)$' (`$\varphi$ is determined in $S$') be a shorthand for: `$\varphi \in S \vee \neg \varphi \in S$'. For a formula $\varphi(x)$ with at most one free variable let `$D(S, \varphi(x))$' abbreviate `$\forall t \in Tm^c (\varphi(t) \in S \vee \neg \varphi(t) \in S)$'. We will use also the expression `$D(S, \varphi, \psi)$' as a shorthand for `$D(S, \varphi) \wedge D(S, \psi)$'.

\begin{definition}[Weak Kleene Jump]\label{wkjump}

\begin{align}
J^{WK}(S) = &\ \ \{\ulcorner t = s \urcorner : val(t) = val(s) \}\nonumber\\
\cup&\ \ \{\ulcorner t \ne s \urcorner: val(t) \ne val(s) \}\nonumber\\
\cup&\ \ \{\ulcorner T(t) \urcorner: val(t) \in S \}\nonumber\\
\cup&\ \ \{\ulcorner \neg T(t) \urcorner: \neg val(t) \in S \vee \neg Sent_{\lt}(val(t))\}\nonumber\\
\cup&\ \ \{\ulcorner \varphi \circ \psi \urcorner: D(S, \varphi, \psi) \wedge (\varphi \in S \circ \psi \in S) \}\nonumber\\
\cup&\ \ \{\ulcorner \neg (\varphi \circ \psi) \urcorner: D(S, \varphi, \psi) \wedge (\neg \varphi \in S \circ_d \neg \psi \in S) \}\nonumber\\
\cup&\ \ \{\ulcorner Qv \varphi \urcorner: D(S, \varphi(v)) \wedge Qt \in Tm^c (\varphi(t) \in S) \}\nonumber\\
\cup&\ \ \{\ulcorner \neg Qv \varphi \urcorner: D(S, \varphi(v)) \wedge Q_d t \in Tm^c (\neg \varphi(t) \in S) \}.\nonumber\
\end{align} 
\end{definition}
Employing the jump operations, Weak Kleene and Strong Kleene fixed-point models can be now defined in the following way.

\begin{definition}\label{lfp} Let $E$ be either Weak Kleene or Strong Kleene evaluation scheme (that is, `$E$' is either `$WK$' or `$SK$'). We define:
\begin{itemize}
\item $T^E_0 = Th(N) =$ the set of aritmetical sentences true in $N$ (the standard model of arithmetic),
\item $T^E_{\alpha +1} = J^E(T^E_{\alpha})$,
\item $T^E_{\lambda} = \underset{\alpha < \lambda}{\bigcup} T^E_{\alpha}$,
\item $T^E$ is $T^E_{\kappa}$ for the least $\kappa$ such that $T^E_{\kappa} = T^E_{\kappa + 1}$.
\end{itemize}
\end{definition}
In effect, $(N, \tsk)$ and $(N, \twk)$ are the least fixed-point models based on Strong Kleene and Weak Kleene logic. They are also models for the axiomatic theories $KF$ and $WKF$, respectively.

Below we introduce one additional piece of notation. 

\begin{definition}\label{clord} Let $E$ be either Weak Kleene or Strong Kleene evaluation scheme. We define $ClOrd_{(N, T^E )}$ (the closure ordinal of $(N, T^E )$) as the least $\kappa$ such that $T^E_{\kappa} = T^E_{\kappa + 1}$.
\end{definition}

It transpires that, although similar, both constructions differ in some important respects. In particular, closure ordinals of Weak and Strong Kleene model-theoretic constructions can be different. 

\begin{theorem}\label{theclord} \hfil
\begin{itemize}
\item[(a)] $ClOrd_{(N, \tsk )} = \omck$,
\item[(b)] $ClOrd_{(N, \twk )}$ can be $\omega$ or $\omck$ or infinitely many ordinals in between.
\end{itemize}
\end{theorem}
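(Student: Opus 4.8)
The plan for (a) is to recognise the Strong Kleene jump $J^{SK}$ of Definition~\ref{skjump} as a positive elementary inductive operator on subsets of $\omega$ and then appeal to the theory of inductive definitions. For the upper bound $ClOrd_{(N,\tsk)} \leq \omck$, I would first check that the free set variable $S$ occurs only positively in every defining clause, so that the sequence of stages is monotone increasing and the quantifiers over terms $Qt \in Tm^c$ are number quantifiers; hence the operator is an elementary positive induction over $N$, whose closure ordinal starting from $\emptyset$ is $\omck$. Starting instead from $Th(N)$ cannot raise this: since $Th(N)$ is contained in the least fixed point, monotonicity gives $J^\alpha(Th(N)) \supseteq J^\alpha(\emptyset)$ at every stage, so the fixed point is reached no later and $ClOrd_{(N,\tsk)} \leq \omck$.

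For the matching lower bound $ClOrd_{(N,\tsk)} \geq \omck$ the idea is that no recursive ordinal can bound the construction. I would fix, for each notation $a \in \mathcal{O}$ with $|a| = \alpha$, an associated recursive well-ordering $\prec$ of type $\alpha$ and use the recursion theorem to manufacture a formula $\chi(x)$ for which $\chi(\bar n)$ is forced to enter $\tsk$ exactly at the stage equal to the $\prec$-rank of $n$, so that the top sentence enters only at stage $\alpha$. As $\alpha$ ranges over all recursive ordinals the supremum of these entry-stages is $\omck$, so the construction cannot close earlier. This is essentially the classical fact that the minimal Strong Kleene fixed point is $\Pi^1_1$-complete with inductive norm $\omck$, which I would either cite or reproduce through the coding just sketched.

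The plan for (b) turns on the single feature distinguishing $J^{WK}$ from $J^{SK}$: in Definition~\ref{wkjump} every compound clause carries a determinedness rider $D(S,\cdot)$, and on the quantifier clauses this rider ranges over all constant terms. Consequently the closure behaviour of $\twk$ depends on the term structure of the chosen arithmetical signature, which is why (b) asserts a range of possible values rather than one ordinal. The decisive point is that Weak Kleene disjunction is infectious: whereas in the climb of (a) a disjunction $\neg(t \prec x) \vee \chi(t)$ is true as soon as its first disjunct is (once $t \not\prec x$), under Weak Kleene it stays undetermined unless $\chi(t)$ too is determined, so a universal quantifier can no longer be confined to an initial segment of $\prec$. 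For the two extreme values I would build signatures with opposite behaviour. To reach $\omck$ I would choose function symbols so that, along the coding of (a), the rider on the relevant quantified sentences is satisfied in tandem with the appearance of the Strong Kleene witness, thereby embedding the full climb into the Weak Kleene construction. To reach $\omega$ I would instead choose a signature so arranged that the infectious rider on every quantified sentence fails to be globally satisfiable beyond the finite stages; the work here is a stabilisation lemma establishing $T^{WK}_\omega = T^{WK}_{\omega+1}$.

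To realise infinitely many ordinals strictly between $\omega$ and $\omck$ I would construct a parametrised family of signatures $\langle L_n \rangle$ whose term structure lets determinedness propagate through exactly $n$ levels before the infectious rider blocks it, producing a strictly increasing sequence of closure ordinals (for instance of the form $\omega \cdot n$, or $\omega^n$, or a prescribed increasing sequence of recursive ordinals), each pinned down by pairing a lower-bound coding with an upper-bound stabilisation lemma as in the two extreme cases. The main obstacle, and the real content of (b), is precisely this calibration: for every target ordinal one must at once exhibit a sentence forced to enter at that stage and prove that nothing enters later, and both tasks are harder than under Strong Kleene because ``entering at stage $\beta$'' now depends on the global determinedness profile of the entire language rather than on a single witness. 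Tuning the term structure so that determinedness is gated at exactly the intended height—neither leaking further nor choking off too early—is the crux of the whole argument.
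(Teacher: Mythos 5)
First, a point of reference: the paper does not prove Theorem \ref{theclord} at all --- part (a) is cited from \cite{Kri75} and part (b) from \cite{CaDa91} --- so the comparison here is against those sources rather than against an argument in the text. Your part (a) is essentially the standard proof and is correct in outline: $S$ occurs only positively in every clause of Definition \ref{skjump}, so $J^{SK}$ is a monotone positive elementary operator and the iteration closes by stage $\omck$ (starting from $Th(N)\subseteq\tsk$ rather than $\emptyset$ changes nothing, as you note), while the matching lower bound comes from coding recursive well-orderings via the recursion theorem. One small caution: the general theory of inductive definitions gives only $ClOrd\leq\omck$ for such an operator; it is your coding argument, not the general theory, that pins the value to exactly $\omck$, so the two halves are not redundant.

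Part (b), by contrast, is a plan rather than a proof, and its gaps coincide with the entire content of the claim. You correctly isolate the mechanism --- the riders $D(S,\cdot)$ make Weak Kleene quantification global, so the disjunctive trick $\neg(t\prec x)\vee\chi(t)$ from the lower bound in (a) no longer localises a universal quantifier --- but the three deliverables are all left as things you ``would'' construct: a stabilisation lemma giving $T^{WK}_{\omega}=T^{WK}_{\omega+1}$ for one choice of language/coding, an embedding of the full $\omck$ climb for another, and a calibrated family realising infinitely many intermediate closure ordinals. You yourself flag the calibration as ``the crux of the whole argument,'' and nothing in the proposal discharges it. There is also a structural mismatch with the known result: your only tuning knob is the signature, whereas the phenomenon established by \cite{CaDa91} and reported in Section \ref{state} is that over the \emph{fixed} language of $PA$ the closure ordinal already varies with the choice of \emph{coding}; the signature is the relevant parameter only for forcing the value $\omck$ (all primitive recursive function symbols, or even just subtraction, per \citep{Spe17}). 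It is not shown that varying the signature alone, under a fixed natural coding, can realise $\omega$ or any intermediate value, so even the architecture of the intended proof of (b) is not secured.
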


Part (a) is due to \cite{Kri75}; part (b) is due to \cite{CaDa91}. As observed by Cain and Damnjanovic, the closure ordinal of Weak Kleene model-theoretic construction depends both on the choice of coding and on the choice of the arithmetical language (namely, on the availability of the function symbols). To be more exact, if our arithmetical language is that of Peano arithmetic (with only the symbols for addition, multiplication, successor and $0$ available), the closure ordinal will depend on the choice of coding. On the other hand, if our arithmetical language contains a function symbol for every primitive recursive function, then (independently of coding) the closure ordinal will be $\omck$.\footnote{See also \citep{Spe17}, where it is demonstrated that expanding the language of Peano arithmetic with the function symbol for subtraction already guarantees that the Weak Kleene fixed-point is reached at $\omck$.} 

What is the impact of these non-absoluteness phenomena on axiomatic theories of truth? So far this question has received very little attention in the literature, although some partial answers have been given by \citet{Fuji10}. In this context, let us mention two important results from Fujimoto's paper.

Firstly, Theorem 50 (2) on p. 35 of the quoted paper establishes that $WKF$ and $KF$ have the same arithmetical consequences.\footnote{Namely, it is demonstrated that the proof-theoretic strength of both theories is that of $RA_{< \epsilon_0}$, that is, of ramified analysis up to $\epsilon_0$.} However, the result comes with an important limitation: Fujimoto assumes that the truth axioms of $WKF$ are added to Peano arithmetic \textit{as formulated in the language of primitive recursive arithmetic} (indeed, the availability of terms for primitive recursive functions is employed in the proof of Fujimoto's theorem). As we have seen, the choice of coding and language influences the properties of model-theoretic constructions based on Weak Kleene logic. It would be interesting to know whether (and how) these non-absoluteness phenomena influence the arithmetical strength of Weak Kleene truth axioms. At present we are not aware of any results in this direction. 

Secondly, Theorem 50 (1) on p. 35 establishes that $KF$ can define the truth predicate of $WKF$. This positive result does not depend on the choice of coding and language. Whatever choices are made in this respect, there will be a formula defining in $KF$ the truth predicate satisfying the Weak Kleene axioms.

However, it has been an open question whether $WKF$ can define the truth predicate of $KF$.\footnote{See also \cite[p. 263]{Hal11}, where this question is explicitly stated as open. So far, the only examples suggesting that Weak Kleene based theories of truth may really be weaker than their Strong Kleene counterparts come from the analysis of typed and non-inductive versions of $KF$ and $WKF$, denoted respectively as $PT^-$ and $WPT^-$ (the theories resembling $KF$ and $WKF$, except that their axioms characterise truth for arithmetical sentences only and they do not contain induction for formulas with the truth predicate). Namely, in \citep{MaBa18} it has been demonstrated that the theory $PT^- + INT$, extending $PT^-$ with the axiom of internal induction, is not truth definable in $WPT^- + INT$.} It has been also far from clear whether the unique answer to this question exists. It could happen, after all, that some versions of $WKF$ (say, one built over the language of primitive recursive arithmetic) define the truth predicate of $KF$, while others do not.

Here we are going to demonstrate that the above question has the absolute negative answer. This follows immediately from Theorem \ref{mainthe}, which is the main result of this paper. The upshot is that no matter how coding/language is chosen, $WKF$ does not define the truth predicate of $KF$. Accordingly, throughout the paper we will simply assume that the arithmetical part of $\lt$ (the language of both $KF$ and $WKF$) is fixed without stipulating what sort of terms for primitive recursive functions it contains. The coding will be also treated as fixed but arbitrary. Nevertheless, Cain's and Damnjanovic's result on closure ordinals will play an essential role in the proof to be presented. 

\section{Preliminaries}

This section describes a few miscellaneous observations and results which will be useful to us later on in this paper. 

We start by defining a notion of a diagonal formula. In what follows $\overline{s}$ is a numeral denoting a number $s$. Expressions `$x = name(y)$' and `$x = sub(y, z, s)$' are arithmetical formulas representing in $PA$ the recursive relations `$x$ is a numeral denoting $y$' and `$x$ is the result of substituting (a term) $s$ for (a variable) $z$ in (a formula) $y$'.
\begin{definition}\label{diag}
Let $\varphi(x_1 \ldots x_n, y)$ be an arbitrary formula of $\lt$. We say that $\psi(x_1 \ldots x_n)$ is the diagonal formula for $\varphi(x_1 \ldots x_n, y)$ if it is constructed in the following manner:
\begin{itemize}
\item Denote by $F(x_1 \ldots x_n, y)$ the formula $`\exists ab [a = name(y) \wedge b = sub(y, \ulcorner y \urcorner, a) \wedge \varphi(x_1 \ldots x_n, b)]$',
\item Let $m$ be the G\"odel number of $F(x_1 \ldots x_n, y)$,
\item Define $\psi(x_1 \ldots x_n)$ as $F(x_1 \ldots x_n, \overline{m})$.
\end{itemize}
\end{definition}

It is a well-known fact that the equivalence of $\psi(x_1 \ldots x_n)$ with $\varphi(x_1 \ldots x_n, \overline{\ulcorner \psi(x_1 \ldots x_n)\urcorner})$ is provable already in $PA$ as formulated in $\lt$ (the acronym $PAT$ will be used here for this theory).\footnote{Since by assumption $\varphi(x_1 \ldots x_n, y)$ is an arbitrary formula of $\lt$, the language of $PA$ needs to be extended. However, in order to obtain the equivalence in question it is enough to assume that the predicate `$T(x)$' appears only in the logical axioms of our theory. This is exactly what we mean by `$PA$ as formulated in $\lt$'.}  The next lemma states that the equivalence remains provable (in $KF$) after the truth predicate is applied on both sides. 

\begin{lemma}\label{diagkf}
Let $\psi(x_1 \ldots x_n)$ be the diagonal formula for $\varphi(x_1 \ldots x_n, y)$. Then:
\begin{itemize}
\item[(a)] $KF \vdash T\big(\psi(x_1 \ldots x_n)\big) \equiv T\big(\varphi(x_1 \ldots x_n, \overline{\ulcorner \psi(x_1 \ldots x_n)\urcorner})\big)$,
\item[(b)] $KF \vdash T\big(\neg \psi(x_1 \ldots x_n)\big) \equiv T\big(\neg \varphi(x_1 \ldots x_n, \overline{\ulcorner \psi(x_1 \ldots x_n)\urcorner})\big)$.
\end{itemize}
\end{lemma}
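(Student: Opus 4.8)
The plan is to unwind $T(\psi)$ and $T(\neg\psi)$ by means of the compositional truth axioms of $KF$, using the fact that the ``diagonalising'' part of $\psi$ is purely arithmetical and hence is evaluated correctly by $T$. By Definition \ref{diag}, $\psi(x_1\ldots x_n)$ has the form $\exists a\exists b[\chi(a,b)\wedge\varphi(x_1\ldots x_n,b)]$, where $\chi(a,b)$ is the $T$-free formula $a = name(\overline m)\wedge b = sub(\overline m,\ulcorner y\urcorner,a)$ and $m=\ulcorner F\urcorner$. Two background facts will do most of the work. First, by representability of $name$ and $sub$, $PA$ proves that $\chi$ has a unique solution, namely $PA\vdash\forall a\forall b[\chi(a,b)\equiv(a=\ulcorner\overline m\urcorner\wedge b=\ulcorner\psi(x_1\ldots x_n)\urcorner)]$; in particular the witness for $b$ is exactly $\ulcorner\psi\urcorner$. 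Second, I will invoke the standard internal correctness of $KF$ for arithmetic: for every $T$-free formula $\sigma(\vec v)$ one has $KF\vdash\forall\vec v[T(\sigma(\dot{\vec v}))\equiv\sigma(\vec v)]$, proved by external induction on $\sigma$ from the atomic axioms and the compositional axioms that $KF$ and $WKF$ share. Applied to $\chi$, this lets me trade $T(\chi)$ for $\chi$.

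For (a), I would first strip the two existential quantifiers of $\psi$ using the $KF$ axiom for the truth of an existential (kf10) and then split the conjunction with the axiom for $T(\varphi\wedge\psi)$, obtaining $KF\vdash T(\psi)\equiv\exists a\exists b[T(\chi(a,b))\wedge T(\varphi(x_1\ldots x_n,b))]$. Replacing $T(\chi(a,b))$ by $\chi(a,b)$ via internal correctness and then collapsing the existentials through the $PA$-provable uniqueness of the witness, the right-hand side reduces to $T(\varphi(x_1\ldots x_n,\overline{\ulcorner\psi\urcorner}))$, which is precisely the claim.

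For (b) I would run the dual argument. Stripping the two existentials of $\neg\psi$ with the axiom for $T(\neg\exists v\,\theta)$ turns them into universal quantifiers, and the axiom for the negated conjunction (kf5) peels off $\varphi$, giving $KF\vdash T(\neg\psi)\equiv\forall a\forall b[T(\neg\chi(a,b))\vee T(\neg\varphi(x_1\ldots x_n,b))]$. Using internal correctness to rewrite $T(\neg\chi(a,b))$ as $\neg\chi(a,b)$, the matrix becomes $\chi(a,b)\to T(\neg\varphi(x_1\ldots x_n,b))$, and uniqueness of the solution of $\chi$ collapses the universal quantifiers to $T(\neg\varphi(x_1\ldots x_n,\overline{\ulcorner\psi\urcorner}))$, as required.

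The one genuinely delicate point, and the step I would watch most carefully, is the coding/substitution bookkeeping: verifying that the $b$ forced by $\chi$ is literally $\ulcorner\psi(x_1\ldots x_n)\urcorner$, so that the $b$-instance of $T(\varphi(x_1\ldots x_n,b))$ really coincides with $T(\varphi(x_1\ldots x_n,\overline{\ulcorner\psi\urcorner}))$, and checking that stripping quantifiers with axioms ranging over closed terms is harmless because $KF\vdash T(\theta(t))\equiv T(\theta(\overline{val(t)}))$. Everything else is routine once internal correctness is available. It is worth stressing why the mere $PAT$-provable equivalence of $\psi$ with $\varphi(x_1\ldots x_n,\overline{\ulcorner\psi\urcorner})$ does not already yield the lemma: in $KF$ the predicate $T$ does not respect provable material equivalence, so the compositional unwinding above is indispensable.
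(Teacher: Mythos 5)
Your proposal is correct and follows essentially the same route as the paper's own proof: unwinding $T(\psi)$ and $T(\neg\psi)$ through the $KF$ compositional axioms for (negated) existentials and conjunctions, trading the truth of the arithmetical diagonalising matrix for the matrix itself via disquotation for $T$-free formulas, and then collapsing the quantifiers by the $PA$-provable uniqueness of the witness $b=\ulcorner\psi\urcorner$. Your closing remark on why mere $PAT$-provable equivalence would not suffice matches the paper's own footnote on this point.
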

\begin{proof}
The assumption that $\psi(x_1 \ldots x_n)$ is the diagonal formula provides us with an exact information about its form.\footnote{Indeed, this piece of information is absolutely crucial. Observe that for Lemma \ref{diagkf} it is not enough to assume that $\psi(x_1 \ldots x_n)$ is provably equivalent (even in $PAT$) to $\varphi(x_1 \ldots x_n, \overline{\ulcorner \psi(x_1 \ldots x_n)\urcorner})$. Thus, for an arbitrary sentence $\varphi$, $PAT \vdash (\varphi \vee \neg \varphi) \equiv 0 = 0$. Still, for many sentences $\varphi$ (those lacking a determinate truth value) it will not be the case that $KF \vdash T(\varphi \vee \neg \varphi) \equiv T(0 = 0)$. } For (b), fixing $x_1 \ldots x_n$ we observe that the following conditions are provably equivalent in $KF$:

\begin{enumerate}
\item $T\big(\neg \psi(x_1 \ldots x_n)\big)$,
\item $T\big(\neg F(x_1 \ldots x_n, \overline{m})\big)$,
\item $\forall ab [a = name(\overline{m}) \wedge b = sub(\overline{m}, \ulcorner y \urcorner, a) \rightarrow T\big(\neg \varphi(x_1 \ldots x_n, b)\big)]$,
\item $T\big(\neg \varphi(x_1 \ldots x_n, \overline{\ulcorner F(x_1 \ldots x_n, \overline{m})\urcorner})\big)$,
\item $T\big(\neg \varphi(x_1 \ldots x_n, \overline{\ulcorner \psi(x_1 \ldots x_n)\urcorner})\big)$.
\end{enumerate}
The equivalence of 1 and 2 holds by the definition of $\psi$; the second equivalence holds by the axiom of $KF$ for negated existential statements and the fact that provably in $KF$, we have full disquotation for arithmetical formulas. The third equivalence holds because, provably in $PA$, $\forall ab [a = name(\overline{m}) \wedge b = sub(\overline{m}, \ulcorner y \urcorner, a) \rightarrow b = \overline{\ulcorner F(x_1 \ldots x_n, \overline{m}) \urcorner}]$. The last equivalence holds by the definition of $\psi$.

Part (a) is proved in a very similar manner.
\end{proof}

The following observation will be also useful.

\begin{observation}\label{positive} Let $\tau(x)$ be a $KF$ truth predicate in $(N, \twk)$. For $\psi \in \sentt$, denote by $\psi^{\tau}$ the result of replacing all occurrences of `$T(t)$' by `$\tau(t)$'  in $\psi$. Then for every positive sentence $\psi \in \lt$:\footnote{A positive sentence is defined as a sentence in which every occurrence of `$T$' lies in the scope of even number of negations.}
\begin{center}
$(N, \twk) \models \tau(\psi) \equiv \psi^{\tau}$.
\end{center}
\end{observation}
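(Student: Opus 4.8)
The plan is to prove the biconditional by induction on the syntactic complexity of the positive sentence $\psi$, exploiting the fact that, since $\tau$ is a $KF$ truth predicate in $(N, \twk)$, all the compositional $KF$ clauses hold in $(N,\twk)$ with $\tau$ put in place of the truth predicate. The crucial structural feature is that the substitution operation $(\cdot)^{\tau}$ is a homomorphism with respect to $\neg, \wedge, \vee, \forall, \exists$ and acts on atoms only by sending $T(t)$ to $\tau(t)$; hence at each step I can match the unfolding of $\tau(\g{\psi})$ supplied by the $KF$ axioms against the corresponding Boolean/quantificational structure of $\psi^{\tau}$.

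First I would treat the cases in which the principal connective of $\psi$ is not a negation. For an arithmetical atom $s = t$ the relevant axiom gives $\tau(\g{s=t}) \equiv val(s) = val(t)$, which is exactly $\psi^{\tau}$ (there is no $T$ to replace). For $\psi = T(t)$ the axiom governing truth of truth-ascriptions yields $\tau(\g{T(t)}) \equiv \tau(val(t))$, and since $t = \overline{val(t)}$ holds in $N$ this is equivalent to $\tau(t) = \psi^{\tau}$. For $\psi = \phi_1 \wedge \phi_2$, $\phi_1 \vee \phi_2$, $\forall v\,\phi$ or $\exists v\,\phi$ — all of whose immediate subsentences are again positive and of strictly lower complexity — the corresponding compositional clause reduces $\tau(\g{\psi})$ to the appropriate combination of the $\tau(\g{\phi_i})$ (respectively of $\tau(\g{\phi(t)})$ for closed terms $t$), and the induction hypothesis together with the homomorphism property of $(\cdot)^{\tau}$ delivers $\psi^{\tau}$. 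In the quantifier cases I would use that in the standard model every element is denoted by some closed term, so that quantification over closed terms coincides with quantification over the domain.

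The delicate part is the case $\psi = \neg\chi$, and here the parity condition built into positivity does the essential work. I would argue by sub-cases on the principal connective of $\chi$. If $\chi$ is an arithmetical atom, the axiom for negated atoms settles it directly. If $\chi = \neg\eta$, the double-negation axiom reduces $\tau(\g{\neg\neg\eta})$ to $\tau(\g{\eta})$ with $\eta$ positive and simpler. If $\chi$ is a conjunction, a disjunction, or a quantified formula, the axioms for negated compounds reduce $\tau(\g{\neg\chi})$ to a combination of terms of the form $\tau(\g{\neg\eta})$, and each such $\neg\eta$ is again positive: since every occurrence of $T$ in $\chi$ lies under an odd number of negations relative to $\chi$, prefixing a further negation restores even parity. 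Thus every descendant is positive and of strictly lower complexity, the induction hypothesis applies, and pushing $(\cdot)^{\tau}$ back through the Boolean/quantificational structure recovers $(\neg\chi)^{\tau}$.

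I expect the main obstacle — and the point that explains why positivity is indispensable — to be precisely the literal $\neg T(t)$, which is \emph{not} positive and therefore never arises as a case above. Were it to occur, the $KF$ clause for negated truth-ascriptions would give $\tau(\g{\neg T(t)}) \equiv \big(\tau(\neg val(t)) \vee \neg Sent_{\lt}(val(t))\big)$, and neither the extra disjunct $\neg Sent_{\lt}(val(t))$ nor the gap between $\tau(\neg val(t))$ and $\neg \tau(t)$ is reconcilable with the target $(\neg T(t))^{\tau} = \neg\tau(t)$ in a Kleene setting. Restricting to positive sentences excludes every occurrence of $T$ under an odd number of negations, so this asymmetry is never confronted and the induction closes.
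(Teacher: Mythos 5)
Your proposal is correct and is essentially the argument the paper has in mind: the paper gives no details, simply citing "the familiar reasoning showing that $KF$ proves disquotation for positive formulas," which is exactly your induction on complexity using the $KF$ compositional clauses for $\tau$, with positivity serving to exclude the problematic literal $\neg T(t)$.
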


The proof is a minor modification of the familiar reasoning showing that $KF$ proves disquotation for positive formulas. For more details, see \cite[p. 201]{Hal11}.

The next definition introduces the notion of a grounded sentence. 

\begin{definition}\label{grounded} Let $E$ be either Weak Kleene or Strong Kleene evaluation scheme. We say that a sentence $\psi \in \lt$ is E-grounded iff either $\psi$ or $\neg \psi$ belongs to $T^E$.
\end{definition}

We state now the following fact.

\begin{fact}\label{basic} \hfil
\begin{itemize}
\item[(i)] For every $\psi \in \lt$, if $\psi \in \twk$, then $\psi \in \tsk$,\footnote{Both here and elsewhere in such contexts we tacitly assume the sameness of coding in the Weak Kleene and the Strong Kleene model.}
\item[(ii)] For every $\psi \in \lt$, if $\psi \in \tsk$, then for every $T \subseteq N~($if $(N, T) \models KF$, then $\psi \in T)$,
\item[(iii)] For every $T \subseteq N~\big($if $(N, T) \models KF$, then $\neg \exists \psi (\psi$ is SK-grounded and $\psi \in T$ and $\neg \psi \in T)\big)$.
\end{itemize}
\end{fact}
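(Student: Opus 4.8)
The plan is to prove the three items by building on the inductive definitions of the jump operators and the standard theory of $KF$-models, treating (i)--(iii) in the order that lets each feed into intuitions for the next.

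The plan is to obtain all three items by transfinite induction on the stages of the relevant Kripke hierarchy, relying on two structural facts. The first is that the Strong Kleene jump $J^{SK}$ is monotone (each membership condition in Definition \ref{skjump} uses the parameter set positively) and that $J^{WK}(S) \subseteq J^{SK}(S)$ for every $S$: comparing the two definitions clause by clause, the atomic clauses coincide, while every compositional and quantifier clause of $J^{WK}$ is obtained from the corresponding $J^{SK}$ clause by adjoining the determinacy conjunct $D(S,\ldots)$, and is therefore logically stronger. The second fact is that the axioms {\scshape kf1--kf13} are exactly the biconditionals mirroring the clauses of $J^{SK}$, together with double negation $T(\neg\neg\chi)\equiv T(\chi)$ and full disquotation for arithmetical sentences; hence in any $(N,T)\models KF$ the extension $T$ is forced to behave as the Strong Kleene jump prescribes.

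For (i) I would prove $\twk_\alpha \subseteq \tsk_\alpha$ by induction on $\alpha$. The base case is immediate since $\twk_0 = Th(N) = \tsk_0$, and limit stages are unions. At a successor stage the induction hypothesis and the two structural facts give $\twk_{\alpha+1} = J^{WK}(\twk_\alpha) \subseteq J^{SK}(\twk_\alpha) \subseteq J^{SK}(\tsk_\alpha) = \tsk_{\alpha+1}$; passing to the respective fixed points yields $\twk \subseteq \tsk$. For (ii), fixing $(N,T)\models KF$, I would show $\tsk_\alpha \subseteq T$ by induction. The base case is arithmetical correctness of $KF$-models, and at a successor stage any $\goed{\chi}\in J^{SK}(\tsk_\alpha)$ has all of its immediate constituents in $\tsk_\alpha \subseteq T$ by the induction hypothesis, so the matching $KF$ biconditional drives $\chi$ into $T$ as well.

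Part (iii) is the substantive one and the main obstacle, precisely because a $KF$-model need not be consistent in general (for ungrounded sentences such as the liar one may well have both $\psi$ and $\neg\psi$ in $T$); groundedness must therefore be used essentially. The plan is to prove, again by transfinite induction on $\alpha$ and for fixed $(N,T)\models KF$, the claim $(\ast_\alpha)$: for every $\psi$, if $\psi \in \tsk_\alpha$ then $\neg\psi \notin T$. The base case follows from arithmetical correctness. At a successor stage I would argue by cases on the $J^{SK}$-clause that placed $\psi$ into $\tsk_{\alpha+1}$. For instance, if $\psi = \varphi_1\wedge\varphi_2$ then $\varphi_1,\varphi_2\in\tsk_\alpha$, so by $(\ast_\alpha)$ neither $\neg\varphi_1$ nor $\neg\varphi_2$ lies in $T$, whence the $KF$ axiom $T(\neg(\varphi_1\wedge\varphi_2))\equiv T(\neg\varphi_1)\vee T(\neg\varphi_2)$ gives $\neg\psi\notin T$; the disjunction, quantifier, and $T(t)/\neg T(t)$ clauses run symmetrically, while the clauses for $\neg(\varphi_1\circ\varphi_2)$ and $\neg Qv\varphi$ are reduced to the previous ones via the double-negation axiom. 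Limit stages are immediate. Granting $(\ast)$, item (iii) follows at once: if $\psi$ is SK-grounded then $\psi\in\tsk$ or $\neg\psi\in\tsk$, and in either case $(\ast)$ (together with double negation in the second case) forbids the opposite sentence from lying in $T$, so $\psi$ and $\neg\psi$ cannot both belong to $T$. The delicate points I expect to check carefully are the bookkeeping of negations in the reductions via double negation and the treatment of terms $t$ with $val(t)$ not coding a sentence in the $T(t)$/$\neg T(t)$ clauses.
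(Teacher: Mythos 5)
Your proposal is correct and follows exactly the route the paper intends: the paper dismisses (i) as ``easily proved by ordinal induction'' and defers (ii) and (iii) to well-known results, and your arguments are precisely those standard inductions (stage-wise inclusion $T^{WK}_\alpha \subseteq T^{SK}_\alpha$ via monotonicity of $J^{SK}$ and $J^{WK}(S)\subseteq J^{SK}(S)$; minimality of the least Strong Kleene fixed point among $KF$-models; and the induction showing $\psi\in T^{SK}_\alpha$ forces $\neg\psi\notin T$). The details you flag as delicate (double-negation bookkeeping, non-sentential $val(t)$) are handled correctly by the $KF$ axioms as you describe.
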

Part (i) can be easily proved by ordinal induction. Parts (ii) and (iii) are well-known.

Fact \ref{basic} permits us to derive the following useful corollary.

\begin{corollary}\label{tauwk}
If $\tau(x)$ is a $KF$ truth predicate in $(N, \twk)$, then for every WK-grounded sentence $\psi \in \lt$:
\begin{center}
$(N, \twk) \models \tau(\psi) \equiv T(\psi)$.
\end{center}
\end{corollary}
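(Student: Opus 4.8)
The plan is to unwind what it means for $\tau$ to be a $KF$ truth predicate in $(N, \twk)$ and to reduce the biconditional to the three clauses of Fact \ref{basic}. Set $S = \{n : (N, \twk) \models \tau(\overline{n})\}$; by the hypothesis on $\tau$, we have $(N, S) \models KF$. Since `$T$' is interpreted as $\twk$ in the model $(N, \twk)$, the assertion $(N, \twk) \models \tau(\psi) \equiv T(\psi)$ amounts to the arithmetical statement $\psi \in S \Leftrightarrow \psi \in \twk$, which I would prove in two directions. The direction from $\twk$ to $S$ needs no groundedness hypothesis and is a direct chain: if $\psi \in \twk$ then $\psi \in \tsk$ by clause (i), and hence $\psi \in S$ by clause (ii) applied with $T = S$.

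For the converse I would argue by contradiction, and this is where WK-groundedness finally plays a role. Suppose $\psi \in S$ but $\psi \notin \twk$. Since $\psi$ is WK-grounded, $\psi \notin \twk$ forces $\neg\psi \in \twk$; clause (i) then gives $\neg\psi \in \tsk$, so that $\psi$ is in fact SK-grounded, and clause (ii) gives $\neg\psi \in S$. Thus both $\psi$ and $\neg\psi$ lie in $S$ while $\psi$ is SK-grounded — precisely the configuration excluded by the consistency clause (iii). This contradiction yields $\psi \in \twk$, completing the argument.

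The one step I would flag as carrying the real content is the appeal to clause (iii): since that clause speaks about \emph{SK}-grounded sentences, it is essential to upgrade the assumed WK-groundedness of $\psi$ to SK-groundedness. That upgrade is exactly what the inclusion $\twk \subseteq \tsk$ of clause (i) supplies, here applied to $\neg\psi$. Everything else is routine bookkeeping about how `$T$' and `$\tau$' are interpreted in $(N, \twk)$.
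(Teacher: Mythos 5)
Your proof is correct and follows essentially the same route as the paper's: define the $KF$ model $S$ from $\tau$, get the easy direction from Fact \ref{basic}(i) and (ii), and for the converse use groundedness to pass from $\psi \notin \twk$ to $\neg\psi \in \twk$ and derive a contradiction with the consistency clause (iii), upgrading WK- to SK-groundedness via clause (i) exactly as the paper does.
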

\begin{proof}
Define $T$ as $\{x: (N, \twk) \models \tau(x) \}$. Given that $\tau(x)$ is a $KF$ truth predicate in $(N, \twk)$, it is easy to observe that $(N, T) \models KF$. Fixing an arbitrary WK-grounded $\psi$, our task is to show that $(N, \twk) \models \tau(\psi) \equiv T(\psi)$. Note that since $\psi$ is WK-grounded, it is also SK-grounded by Fact \ref{basic}(i). 

For the implication from left to right, assume that $(N, \twk) \models \tau(\psi)$, so $(N, T) \models T(\psi)$ by the definition of $T$. Assuming for the indirect proof that $(N, \twk) \models \neg T(\psi)$, we obtain: $(N, \twk) \models T(\neg \psi)$ because $\psi$ is grounded. Therefore by Fact \ref{basic}(i) $(N, \tsk) \models T(\neg \psi)$ and so $(N, T) \models T(\neg \psi)$  by Fact \ref{basic}(ii). In effect, both $\psi$ and $\neg \psi$ belong to $T$. However, this contradicts Fact \ref{basic}(iii), because $\psi$ is SK-grounded.

For the opposite implication, assume that $(N, \twk) \models T(\psi)$. Then by Fact \ref{basic}(i) we have: $(N, \tsk) \models T(\psi)$ and so $(N, T) \models T(\psi)$ by Fact \ref{basic}(ii). From the definition of $T$, it immediately follows that $(N, \twk) \models \tau(\psi)$ as required.
\end{proof}

\section{Statement of the result. Main lemmas}

In this section we state our principal theorem, formulate the main lemmas and provide the proof of one of the two lemmas. 

\begin{theorem}[main theorem]\label{mainthe} There is no formula $\tau(x) \in \lt$ such that $\tau(x)$ is a truth predicate of $KF$ in $(N, \twk)$.
\end{theorem}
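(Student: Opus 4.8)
My plan is to argue by contradiction. Suppose $\tau(x) \in \lt$ is a $KF$ truth predicate in $(N, \twk)$ and put $T := \{x : (N, \twk) \models \tau(x)\}$, so that $(N, T) \models KF$ exactly as in the proof of Corollary \ref{tauwk}. The first, and easier, of the two ingredients is a \emph{correctness lemma} stating that $\tau$ decides every SK-grounded sentence in accordance with $\tsk$; this is the lemma I would prove in this section. It follows directly from Fact \ref{basic}: if $\psi$ is SK-grounded then exactly one of $\psi, \neg \psi$ belongs to $\tsk$, and $\tsk \subseteq T$ by Fact \ref{basic}(ii), while Fact \ref{basic}(iii) rules out both lying in $T$; hence $(N, \twk) \models \tau(\psi)$ iff $\psi \in \tsk$. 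The second ingredient is the positive-disquotation behaviour of $\tau$ recorded in Observation \ref{positive}: for positive $\psi$ we have $(N, \twk) \models \tau(\psi) \equiv \psi^{\tau}$.

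The core of the argument is to confront these two ingredients with the self-referential sentences witnessing Theorem \ref{theclord}(a), i.e. $ClOrd_{(N, \tsk)} = \omck$. Using the diagonalisation of Definition \ref{diag} together with Lemma \ref{diagkf}, I would construct, uniformly in $n$ and for a suitably chosen recursive ordering $\prec$, a positive formula $W$ for which $KF \vdash T(W(n)) \equiv \forall m (m \prec n \rightarrow T(W(m)))$. In $(N, \tsk)$ this recursion is solved by the least fixed point, so that $W(n) \in \tsk$ precisely on the well-founded part of $\prec$, with SK-rank tracking the $\prec$-rank; choosing $\prec$ appropriately makes these ranks cofinal in $\omck$. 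By the correctness lemma $\tau$ must return the same verdicts on the SK-grounded instances, and by Observation \ref{positive} the predicate $n \mapsto \tau(W(n))$ satisfies, inside $(N, \twk)$, the very same monotone recursion $\tau(W(n)) \equiv \forall m (m \prec n \rightarrow \tau(W(m)))$. Thus the entire Strong Kleene hierarchy of these sentences is mirrored by a single formula evaluated in the Weak Kleene model.

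The final step is to extract a contradiction from the fact that an $\omck$-deep Strong Kleene hierarchy has been reproduced by a single formula inside $(N, \twk)$, and this is where I expect the main obstacle to lie. Two difficulties must be overcome. First, the correctness lemma constrains $\tau$ only on SK-grounded instances, whereas for an ill-founded $\prec$ the instances on the non-well-founded part are SK-ungrounded; one must therefore argue that the monotone recursion forced by Observation \ref{positive}, together with the minimality of the fixed point, still pins $n \mapsto \tau(W(n))$ down to the well-founded part. Second, and more seriously, one must bound the height to which this reproduced recursion can climb inside the Weak Kleene model. Here the structure of the Weak Kleene jump (Definition \ref{wkjump}) is decisive: a universal clause $\forall m (m \prec n \rightarrow T(W(m)))$ becomes determined only after all of its instances are, so the progress of the $\tau$-recursion on the WK-grounded instances is tied, via Corollary \ref{tauwk}, to $ClOrd_{(N, \twk)}$. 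Making this bound precise and showing that it cannot reach ranks cofinal in $\omck$ is exactly the content of the second main lemma, and it is here that Cain's and Damnjanovic's result on the Weak Kleene closure ordinal (Theorem \ref{theclord}(b)) plays its essential role. With that lemma in hand, the clash between the $\omck$-cofinal Strong Kleene ranks reproduced by $\tau$ and the bounded reach of the Weak Kleene construction furnishes the contradiction, refuting the existence of $\tau$.
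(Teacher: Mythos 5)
Your first two ingredients are sound: the correctness of $\tau$ on SK-grounded sentences is exactly Fact \ref{basic}(ii)--(iii) (cf.\ the proof of Corollary \ref{tauwk}), and the positive-disquotation step via Lemma \ref{diagkf} and Observation \ref{positive} does yield a positive $W$ with $(N,\twk)\models\tau(W(n))\equiv\forall m\,(m\prec n\rightarrow\tau(W(m)))$. The gap is in the final step, and it is not a technicality that can be patched: the contradiction you are aiming for does not exist. First, the recursion does not pin $n\mapsto\tau(W(n))$ down to the well-founded part of $\prec$. The set $T=\{x:(N,\twk)\models\tau(x)\}$ is merely \emph{some} model of $KF$, i.e.\ some fixed point, not the least one; on the ill-founded part of $\prec$ the instances $W(n)$ are SK-ungrounded, the correctness lemma says nothing about them, and $\tau$ is free to declare them true. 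So $\{n:(N,\twk)\models\tau(W(n))\}$ is only a superset of the well-founded part, which blocks any $\Pi^1_1$-completeness argument. Second, and more fundamentally, the bound you hope to extract from $ClOrd_{(N,\twk)}$ is false. Corollary \ref{tauwk} constrains $\tau$ only on WK-grounded sentences, and your $W(n)$, being universally quantified and self-referential, will generally be WK-ungrounded, so the Weak Kleene closure ordinal does not limit how $\tau$ behaves on them. Worse, when $ClOrd_{(N,\twk)}=\omck$ the paper proves the exact opposite of what you need: Lemma \ref{leastfp} and Corollary \ref{taulfp} show that a $KF$ truth predicate in $(N,\twk)$ \emph{can} reproduce the entire $\omck$-deep Strong Kleene hierarchy, using the well-founded sentences of $\lt$ as ordinal notations. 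An $\omck$-cofinal family of correct verdicts inside $(N,\twk)$ is therefore not a contradiction; it is a theorem.

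The actual argument runs in a different direction. The impossibility is obtained from Tarski's undefinability theorem (Lemma \ref{main1}): if $\tau$ came equipped with a formula $G$ that $\tau$-recognizes WK-groundedness in both directions, then $\tau(x[G(t)\wedge T(t)/T(t)])$ would define truth in $(N,\twk)$. The entire difficulty is then pushed into producing such a $G$ from the mere existence of $\tau$ (Lemma \ref{main2}), and this is where the case split on $ClOrd_{(N,\twk)}$ enters: when the closure ordinal is below $\omck$ the WK-grounded sentences form a $\Delta^1_1$ set and Kripke's Theorem \ref{kripthe} supplies a total SK-definition of it; when it equals $\omck$ one uses the richness of $(N,\twk)$ constructively, building a diagonal groundedness formula $\theta$ and a modified fixed point $T^\theta$ definable from $\tau$. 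Cain and Damnjanovic's result is what forces this case distinction; it is not a source of an upper bound on definability in $(N,\twk)$. If you want to salvage your outline, the missing idea is precisely a $\tau$-decidable groundedness predicate controlling the ungrounded instances, and at that point you have reconstructed the paper's two lemmas.
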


\noindent
Since $(N, \twk) \models WKF$, it immediately follows that $WKF$ does not define the truth predicate of $KF$.

Theorem \ref{mainthe} will be obtained as a direct corollary from the two lemmas listed below.

\begin{lemma}\label{main1}
There are no formulas $\tau(x)$ and $G(x) \in \lt$ such that $\tau(x)$ is a $KF$ truth predicate in $(N, \twk)$ and for every $\psi \in \sentt$:
\begin{itemize}
\item[(a)] $\psi$ is WK-grounded iff $(N, \twk) \models \tau(G(\psi))$,
\item[(b)] $\psi$ is WK-ungrounded iff $(N, \twk) \models \tau(\neg G(\psi))$.
\end{itemize}
\end{lemma}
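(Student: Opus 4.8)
The plan is to argue by contradiction through a liar-style diagonalisation that forces the candidate truth predicate $\tau$ to pronounce on its own groundedness status. So I would assume that $\tau$ (a $KF$ truth predicate in $(N,\twk)$) and $G$ with properties (a) and (b) both exist, and apply the diagonal construction of Definition \ref{diag} to the formula $\varphi(y):=\neg G(y)$, obtaining the diagonal formula $\lambda\in\sentt$. The reason for insisting on a genuine diagonal formula, rather than merely some provably equivalent sentence, is precisely the point flagged in the footnote to Lemma \ref{diagkf}: $PAT$-provable equivalence need not survive the application of the truth predicate. With the genuine diagonal formula in hand, Lemma \ref{diagkf} applies, and since $\tau$ is a $KF$ predicate in $(N,\twk)$ we obtain $(N,\twk)\models\tau(\lambda)\equiv\tau(\neg G(\lambda))$ and, using the $KF$ clause for double negation, $(N,\twk)\models\tau(\neg\lambda)\equiv\tau(G(\lambda))$.

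Feeding these two equivalences into the defining properties of $G$ yields the core biconditionals that drive everything: $(N,\twk)\models\tau(\lambda)$ iff $\lambda$ is WK-ungrounded, and $(N,\twk)\models\tau(\neg\lambda)$ iff $\lambda$ is WK-grounded. Since for every sentence exactly one of WK-grounded / WK-ungrounded holds, this already tells me that exactly one of $\tau(\lambda)$, $\tau(\neg\lambda)$ is true in $(N,\twk)$, so $\lambda$ is $\tau$-determinate. The plan from here is a case analysis on the \emph{genuine} WK-status of $\lambda$, that is, on whether $\lambda\in\twk$, $\neg\lambda\in\twk$, or neither.

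The determinate cases I expect to dispatch cleanly using Corollary \ref{tauwk} together with Fact \ref{basic}. If $\lambda\in\twk$ then $\lambda$ is WK-grounded, so the first biconditional forces $(N,\twk)\models\neg\tau(\lambda)$; but Corollary \ref{tauwk} gives $(N,\twk)\models\tau(\lambda)\equiv T(\lambda)$, and $\lambda\in\twk$ delivers $(N,\twk)\models\tau(\lambda)$, a contradiction. The symmetric determinate possibility $\neg\lambda\in\twk$ I would exclude by running the same construction on the diagonal formula of $G(y)$ (the sentence that instead ``asserts its own groundedness''): there the grounded-false valuation forces both $\lambda$ and $\neg\lambda$ into the $\tau$-extension via Corollary \ref{tauwk}, contradicting the consistency of the fixed point supplied by Fact \ref{basic}(i) and (iii). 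Thus both determinate valuations can be ruled out.

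The step I expect to be the main obstacle is the remaining case, where $\lambda$ is a genuine Weak Kleene gap (WK-ungrounded with neither $\lambda$ nor $\neg\lambda$ in $\twk$). This is exactly the case that a naive liar does \emph{not} close, because the classical truth value of $\lambda$ in $(N,\twk)$ is not by itself tied to membership in $\twk$, so being ungrounded is a priori compatible with $\lambda$ being classically false and nothing contradictory following. To eliminate it I would have to show that the self-referential loop running through $G$ and $\tau$ cannot in fact be left undetermined by the least fixed point. The leverage I would try to exploit is that $G(\lambda)$ is $\tau$-determinate; combining this with Corollary \ref{tauwk} and Observation \ref{positive} should let me argue that $G(\lambda)$, and hence $\lambda$ itself, is genuinely WK-grounded, by a direct analysis of the Weak Kleene jump of Definition \ref{wkjump} applied to the actual syntactic form of $\lambda$ — in particular its built-in determinateness requirement. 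Making this groundedness argument rigorous (so that only the already-contradictory determinate cases survive) is where the real work lies; the diagonalisation and the Corollary \ref{tauwk}/Fact \ref{basic} bookkeeping around it are routine by comparison.
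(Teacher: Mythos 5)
Your diagonalisation of $\neg G(y)$ does yield the two biconditionals you state ($\tau(\lambda)$ holds in $(N,\twk)$ iff $\lambda$ is WK-ungrounded, $\tau(\neg\lambda)$ holds iff $\lambda$ is WK-grounded), and the subcase $\lambda\in\twk$ is genuinely contradictory via Corollary \ref{tauwk}. But the proof does not close, and you say so yourself. First, the subcase $\neg\lambda\in\twk$ is not actually excluded by your sketch: there you get $\tau(\neg\lambda)$ and $T(\neg\lambda)$ and nothing else, which is perfectly consistent, and passing to the diagonal sentence of $G(y)$ only produces analogous biconditionals about a \emph{different} sentence, which are again satisfiable in both of its grounded and ungrounded cases. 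Second, and decisively, the case where $\lambda$ is a genuine Weak Kleene gap is left open, and there is no reason to believe it can be closed along the lines you indicate. Conditions (a) and (b) constrain only how $\tau$ classifies $G(\psi)$; they say nothing about how the Weak Kleene fixed point classifies $G(\lambda)$, so the WK-status of $G(\lambda)$, and hence of $\lambda$, is essentially unconstrained by the hypotheses. In particular, $\tau$-determinateness of a sentence does not transfer to WK-groundedness: any $KF$ truth predicate classifies plenty of WK-ungrounded sentences as true (for instance $0=0\vee L$ with $L$ the liar), so your proposed leverage --- ``$G(\lambda)$ is $\tau$-determinate, hence WK-grounded'' --- fails.

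The paper avoids the single-sentence liar entirely. It sets $Tr(x):=\tau(x[G(t)\wedge T(t)/T(t)])$ and shows by induction on complexity that $(N,\twk)\models Tr(\psi)\equiv\psi$ for every $\psi\in\sentt$, contradicting Tarski's undefinability theorem. The substitution of $G(t)\wedge T(t)$ for $T(t)$ is precisely the device that neutralises the ungrounded case: when $val(t)$ is WK-ungrounded, conditions (a) and (b) force $\tau(G(t)\wedge T(t))$ to be false and $\tau(\neg(G(t)\wedge T(t)))$ to be true, matching the classical values of $T(t)$ and $\neg T(t)$ in $(N,\twk)$; when $val(t)$ is WK-grounded, the conjunct $G(t)$ drops out and Corollary \ref{tauwk} finishes the atomic case. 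So the groundedness case split is performed where it can be resolved --- at the atoms $T(t)$, quantifying over all possible $val(t)$ --- rather than at one fixed self-referential sentence whose own WK-status the hypotheses do not determine. To salvage a diagonal argument you would need a sentence whose WK-status is forced by (a) and (b), and that is exactly what those conditions do not provide.
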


\begin{lemma}\label{main2} If $(N, \twk)$ defines a truth predicate of $KF$, then there are formulas $\tau(x)$ and $G(x)$ satisfying the conditions (a) and (b) from Lemma \ref{main1}.
\end{lemma}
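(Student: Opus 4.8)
The plan is to keep the given truth predicate and to build $G$ so that it carries out the whole groundedness test itself, leaving $\tau$ with nothing to do but decide a fixed arithmetical equality. First I would use the hypothesis to fix a formula $\tau(x)$ that is a $KF$ truth predicate in $(N,\twk)$; this same $\tau$ will witness conditions (a)--(b) (recall that these conditions, as imported from Lemma \ref{main1}, already presuppose that $\tau$ is a $KF$ truth predicate, so nothing more is needed on its part). The key observation is that WK-groundedness is \emph{already} first-order definable in the model by means of the genuine object-language predicate `$T$', whose extension in $(N,\twk)$ is exactly $\twk$: by Definition \ref{grounded}, $\psi$ is WK-grounded iff $(N,\twk)\models T(\psi)\vee T(\neg\psi)$. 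I would record this by the $\lt$-formula $\delta(x):=\big(T(x)\vee T(\dot\neg x)\big)\wedge\sentt(x)$, where $\dot\neg$ is the arithmetically representable negation function on codes.

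Next I would define $G$ as the $\lt$-definable function given by a case split on $\delta$: set $G(\psi)=\g{0=0}$ whenever $\delta(\g{\psi})$ holds, and $G(\psi)=\g{0=\overline 1}$ otherwise. Since WK-grounded and WK-ungrounded are complementary over $\sentt$ by Definition \ref{grounded}, the formula $\delta$ partitions $\sentt$ (in the model) accordingly, so $G$ is total on $\sentt$ in $(N,\twk)$; its graph is an $\lt$-formula (whose definition is allowed to use `$T$'), which is all that conditions (a)--(b) demand.

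The verification then reduces to the fact that a $KF$ truth predicate evaluates standard arithmetical equalities correctly. From the atomic axioms {\scshape kf1}--{\scshape kf2} (shared verbatim with $WKF$ by Definition \ref{wkf}) one reads off $(N,\twk)\models\tau(\g{0=0})$, $(N,\twk)\models\neg\tau(\g{0=\overline 1})$, $(N,\twk)\models\neg\tau(\g{\neg(0=0)})$ and $(N,\twk)\models\tau(\g{\neg(0=\overline 1)})$. Hence for (a): $(N,\twk)\models\tau(G(\psi))$ iff $\delta(\g{\psi})$ iff $\psi$ is WK-grounded; and for (b): $\neg G(\psi)$ equals $\g{\neg(0=0)}$ when $\psi$ is WK-grounded and $\g{\neg(0=\overline 1)}$ when $\psi$ is WK-ungrounded, so $(N,\twk)\models\tau(\neg G(\psi))$ iff $\psi$ is WK-ungrounded. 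This establishes (a)--(b) for the pair $\tau,G$, completing the argument.

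The only real point to get right -- and the step I expect to be the main (conceptual) obstacle rather than a computational one -- is the decision to route groundedness through $G$ instead of through $\tau$. The naive attempt of taking $G(\psi)=\g{T(\psi)\vee T(\neg\psi)}$ and reading off $\tau$ fails: the compositional and self-iteration axioms for $\tau$ collapse this to $\tau(\psi)\vee\tau(\neg\psi)$, and since $\twk\subseteq\tsk\subseteq\{x:(N,\twk)\models\tau(x)\}$ by Fact \ref{basic}(i)--(ii), the predicate $\tau$ overshoots, declaring true for instance $(0=0)\vee\lambda$ with $\lambda$ a liar, which is SK-grounded yet WK-ungrounded. Letting $G$ consult the object-language predicate `$T$' directly, and handing $\tau$ only a decidable arithmetical sentence, sidesteps this overshoot entirely; no appeal to the closure-ordinal phenomena is needed here, those being reserved for the impossibility direction in Lemma \ref{main1}.
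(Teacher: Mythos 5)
Your proposal has a genuine gap, and it sits exactly at the point you flag as the ``main conceptual obstacle''. Conditions (a)--(b) require a \emph{formula} $G(x)\in\lt$: in `$\tau(G(\psi))$' the predicate $\tau$ is applied to the \emph{code of the sentence} $G(\overline{\g{\psi}})$, and $\tau$, being a $KF$ truth predicate, evaluates that sentence compositionally through the $KF$ axioms. In particular, by {\scshape kf12}--{\scshape kf13} every occurrence of the object-language `$T$' inside $G$ is reinterpreted as $\tau$'s own extension, not as $\twk$. Your $\delta(x)$ does define WK-groundedness in $(N,\twk)$, but $\tau(\delta(\psi))$ collapses to $\tau(\psi)\vee\tau(\neg\psi)$ --- precisely the overshoot you yourself diagnose, since the extension of $\tau$ contains $\tsk\supsetneq\twk$. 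Your proposed repair --- setting $G(\psi)=\g{0=0}$ when $\delta(\g{\psi})$ holds and $G(\psi)=\g{0=\overline{1}}$ otherwise --- is not a formula of $\lt$ at all: the syntactic identity of the sentence $G(\psi)$ cannot vary with which semantic case obtains. If you internalise the case split as the formula $(\delta(x)\wedge 0=0)\vee(\neg\delta(x)\wedge 0=\overline{1})$, then $\tau$ again evaluates $\delta$ and the overshoot returns; if instead you read $G$ as a definable \emph{function} with an $\lt$-definable graph and reinterpret `$\tau(G(\psi))$' as `$\tau$ holds of the value of $G$', you have changed the statement being proved and, fatally, broken the interface with Lemma \ref{main1}, whose proof substitutes the \emph{formula} $G(t)\wedge T(t)$ for $T(t)$ inside arbitrary sentences --- including occurrences $T(v)$ with $v$ a bound variable, where a function-valued $G(v)$ is meaningless.

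The actual content of Lemma \ref{main2} is that some definable $KF$ truth predicate must \emph{itself} decide WK-groundedness; the work cannot be outsourced to the ambient model through the object-language `$T$'. That is why the paper's proof is long and case-split on $ClOrd_{(N,\twk)}$: when the closure ordinal is below $\omck$, the groundedness set is shown to be $\Delta_1^1$ and Kripke's representability theorem supplies a \emph{total} formula $G$ decided already in $(N,\tsk)$; when it equals $\omck$, one must build a modified fixed point $T^{\theta}$ that explicitly adds $\theta(\psi)$ for every WK-ungrounded $\psi$, and then show that $T^{\theta}$ is definable in $(N,\twk)$ via the well-founded-sentence machinery (Lemmas \ref{grwf}--\ref{leastfp}). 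No argument that ignores the closure ordinal and consults only the raw `$T$' of $(N,\twk)$ can establish the lemma as stated.
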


Observe that Theorem \ref{mainthe} follows trivially from the lemmas. We now give the proof of Lemma \ref{main1}. Lemma \ref{main2} will be proved in the sections to follow. \\

\noindent
\textbf{Proof of Lemma \ref{main1}.}
Assuming that the lemma is false, define:
\begin{center}
$Tr(x):= \tau(x[G(t) \wedge T(t)/T(t)])$.
\end{center}
In other words, $Tr(x)$ is the formula stating that the result of formally substituting `$G(t) \wedge T(t)$' for every occurrence of `$T(t)$' in $x$ satisfies $\tau$.\footnote{I use the word `formally' in order to emphasise that the substitution is carried out in the object language, not in the metalanguage.} 

We now show that for every $\psi \in \sentt$, $(N, \twk) \models Tr(\psi) \equiv \psi$, which contradicts Tarski's undefinability theorem, thus ending the proof. The argument proceeds by induction on the complexity of $\psi$. Below we consider only cases of $\psi$ of the form $T(t)$ or $\neg T(t)$, since it is only here where conditions (a) and (b) from Lemma \ref{main1} are crucially used.

For $\psi = \ulcorner T(t) \urcorner$, our task is to demonstrate that:
\begin{center}
$(N, \twk) \models \tau(G(t) \wedge T(t)) \equiv T(t)$. 
\end{center}
If $val(t)$ is WK-ungrounded, then the above equivalence holds in $(N, \twk)$ because both sides of the equivalence are false. In other words, we have then: $(N, \twk) \nvDash T(t)$ and $(N, \twk) \nvDash \tau(G(t) \wedge T(t))$. The fact that $(N, \twk) \nvDash T(t)$ is obvious on the assumption that $val(t)$ is WK-ungrounded, so for the indirect proof let us assume that $(N, \twk) \models \tau(G(t) \wedge T(t))$. Then  $(N, \twk) \models \tau(G(t))$, which by condition (a) of the lemma implies that $val(t)$ is WK-grounded and in this way a contradiction is obtained.

On the other hand, if $val(t)$ is WK-grounded, then the following conditions are equivalent:
\begin{itemize}
\item $(N, \twk) \models \tau(G(t) \wedge T(t))$,
\item $(N, \twk) \models \tau(T(t))$,
\item $(N, \twk) \models T(t)$.
\end{itemize}
The first equivalence holds by the assumption (a) of the lemma. For the second equivalence, observe that if $val(t)$ is WK-grounded, then $T(t)$ is also WK-grounded and so by Corollary \ref{tauwk} we have: $(N, \twk) \models \tau(T(t))$ iff $(N, \twk) \models T(T(t))$, with the last condition being equivalent to $(N, \twk) \models T(t)$ by the axiom {\scshape wkf12}.

For $\psi = \ulcorner \neg T(t) \urcorner$, our task is to demonstrate that:
\begin{center}
$(N, \twk) \models \tau \big(\neg (G(t) \wedge T(t)) \big) \equiv \neg T(t)$. 
\end{center}
If $val(t)$ is WK-ungrounded, then the above equivalence holds in $(N, \twk)$ because both sides of the equivalence are true. In other words, we have then: $(N, \twk) \models \neg T(t)$ and $(N, \twk) \models \tau \big(\neg (G(t) \wedge T(t)) \big)$. For the second conjunct (the first one is obvious) observe that by the assumption (b) of the lemma we will have then $(N, \twk) \models \tau(\neg G(t))$ and thus $(N, \twk) \models \tau \big(\neg (G(t) \wedge T(t)) \big)$ because $\tau(x)$ is a $KF$ truth predicate.

If $val(t)$ is WK-grounded, then the following conditions are equivalent:
\begin{itemize}
\item $(N, \twk) \models \tau \big(\neg (G(t) \wedge T(t)) \big)$,
\item $(N, \twk) \models \tau(\neg T(t))$,
\item $(N, \twk) \models \neg T(t)$.
\end{itemize}
The first equivalence holds because by the assumption (b) of the lemma $(N, \twk) \nvDash \tau (\neg G(t))$.\footnote{The condition `$(N, \twk) \models \tau \big(\neg (G(t) \wedge T(t)) \big)$' implies that $(N, \twk) \models \tau \big(\neg G(t)\big)$ or $(N, \twk) \models \tau \big(\neg T(t)\big)$. Since $val(t)$ is WK-grounded, the first disjunct is excluded by the assumption (b) of the lemma, hence $(N, \twk) \models \tau(\neg T(t))$.} With $val(t)$ being WK-grounded, `$\neg T(t)$' is also WK-grounded, hence the second equivalence follows immediately from Corollary \ref{tauwk}. $\Box$

\section{Proof of Lemma \ref{main2}}

For the proof of Lemma \ref{main2} some additional auxiliary notions and facts will be needed. These are introduced below.

\begin{definition}\label{tria} \hfil
\begin{itemize}
\item $x \vartriangleleft y$ is an abbreviation of the following arithmetical formula:
\begin{flushleft}
$\sentt(x) \wedge \sentt (y) \wedge \newline
\Big( \exists t \in Tm^c  (y = \ulcorner T(t) \urcorner \wedge x = val(t)) \newline
\vee \exists \psi \in \sentt (y = \ulcorner \neg \psi \urcorner \wedge x = \psi) \newline
\vee \exists \varphi, \psi \in \sentt(y = \ulcorner \varphi \circ \psi \urcorner \wedge x = \varphi \vee x = \varphi) \newline
\vee  \exists \theta(x) \in Fm_T^{\leq 1} \exists t \in Tm^c \exists v \in Var(y = \ulcorner Qv \theta(v) \urcorner \wedge x = \ulcorner \theta(t) \urcorner) \Big)$.
\end{flushleft}
\item $\vartriangleleft^*$ denotes the transitive closure of $\vartriangleleft$ (in other words, $x \vartriangleleft^* y$ iff there is a path from $y$ to $x$ along the $\vartriangleleft$ relation),
\item $x \trianglelefteq y$ iff $x \vartriangleleft y \vee x = y$; $x \trianglelefteq^* y$ is defined in a similar manner.
\item a sentence $\psi \in \lt$ is well-founded (wf in short) iff $\tria$ is well-founded on $\{x: x \etrias \psi \}$.
\end{itemize}
\end{definition}
The expression `$Fm_T^{\leq 1}$' used in the above definitions denotes the set of formulas of $\lt$ with at most one free variable. $Var$ is the set of variables.
\begin{definition}\label{ord} For an arbitrary well-founded $\psi$, $Ord(\psi)$ (the ordinal of $\psi$) is defined in the following manner:
\begin{center}
$Ord(\psi) = sup\{Ord(x): x \tria \psi \}$.
\end{center}
\end{definition}

\begin{lemma}\label{grwf}
For every $\psi \in \sentt$, $\psi$ is $WK$-grounded iff $\psi$ is well-founded.
\end{lemma}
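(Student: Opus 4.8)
The plan is to prove the two directions of the biconditional separately. The guiding intuition is that the determinacy side-conditions $D(S,\ldots)$ built into the Weak Kleene jump (Definition \ref{wkjump}) are precisely what force a compound to receive a truth value only once \emph{all} of its immediate constituents already have one; iterating this observation identifies groundedness with the absence of an infinite descending chain of constituents, that is, with well-foundedness of $\tria$ below $\psi$. (This is exactly the feature that fails for the Strong Kleene jump, which is why the equivalence is special to $WK$.)

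For the direction from well-founded to $WK$-grounded I would argue by induction along the well-founded relation $\tria$ (using $Ord(\psi)$ from Definition \ref{ord} as a convenient rank), working directly at the fixed point via the identity $\twk = J^{WK}(\twk)$ coming from Definition \ref{lfp}. The base cases are the $\tria$-minimal sentences: atomic $t=s$ and $t\ne s$, and $\ulcorner T(t)\urcorner$, $\ulcorner\neg T(t)\urcorner$ with $val(t)$ not a code of a sentence; in each the matching clause of $J^{WK}$ places $\psi$ or $\neg\psi$ into $\twk$ outright. For the inductive step $\psi$ is a connective compound, a quantified sentence, or $\ulcorner T(t)\urcorner$/$\ulcorner\neg T(t)\urcorner$ with $val(t)$ a sentence; in each case the induction hypothesis applied to the immediate $\tria$-predecessors of $\psi$ says that they are all $WK$-grounded, so the relevant determinacy condition holds (e.g. $D(\twk,\varphi,\psi)$ for $\varphi\circ\psi$, or $D(\twk,\varphi(v))$ for $Qv\varphi$). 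A short case split then shows one of the matching positive/negative jump clauses applies: if some witnessing constituent lies in $\twk$ the sentence itself enters $\twk$, otherwise all constituents are refuted and the negation enters $\twk$. Either way $\psi$ is $WK$-grounded.

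For the converse—$WK$-grounded implies well-founded—I would assign to each grounded sentence its \emph{stage} $lev(\psi)$, the least $\alpha$ with $\psi\in\twk_\alpha$ or $\neg\psi\in\twk_\alpha$, and exhibit an ordinal-valued rank on the grounded sentences that strictly decreases along $\tria$, which rules out infinite descending $\tria$-chains and hence gives well-foundedness. The key structural fact, read off from the $D(S,\ldots)$ conditions of $J^{WK}$ together with the observation that a least stage is never a nonzero limit, is that whenever $x\tria\psi$ and $\psi$ is grounded, then $x$ is grounded with $lev(x)\le lev(\psi)$, the inequality being strict across a $T$-link (from $\ulcorner T(t)\urcorner$ to $val(t)$) and across any connective or quantifier first appearing at a successor stage. \textbf{The one genuine obstacle is the base stage} $\twk_0=Th(N)$: a true arithmetical compound and its immediate subformulas all sit at stage $0$, so $lev$ does not drop there. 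I would resolve this by ranking grounded sentences lexicographically by the pair $(lev(\psi),\, c(\psi))$, where $c(\psi)$ is the syntactic complexity of $\psi$; since stage-$0$ sentences are purely arithmetical they contain no $T$-links, so every stage-preserving step is a connective/quantifier decomposition that strictly lowers $c$, while every stage-lowering step is controlled by the first coordinate. This lexicographic rank strictly decreases along $\tria$, completing the argument.
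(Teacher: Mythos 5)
Your proof is correct, and while your left-to-right direction (well-founded implies $WK$-grounded) is essentially the paper's argument --- a transfinite induction along $\tria$ in which the determinacy side-conditions of $J^{WK}$ do exactly the work you describe, merely phrased at the fixed point rather than stage by stage --- your treatment of the converse is genuinely different. The paper proves that every member of $T^{WK}_{\alpha}$ is well-founded by ordinal induction on $\alpha$, building well-founded $\tria$-trees from below: the determinacy conditions force all immediate $\tria$-predecessors of a sentence entering at stage $\alpha$ to be determined, hence well-founded by the inductive hypothesis, at an earlier stage. You instead exhibit an ordinal-valued rank, the lexicographic pair $(lev(\psi), c(\psi))$, that strictly decreases along $\tria$ on the grounded sentences, and conclude well-foundedness from the absence of infinite descending chains. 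The two arguments are dual and of comparable difficulty; yours has the merit of isolating explicitly the one place where the naive rank $lev$ alone fails, namely the purely arithmetical sentences sitting together at stage $0$ --- a point the paper's write-up passes over silently (its induction as stated treats only limit and successor $\alpha$, and the base case $T^{WK}_{0} = Th(N)$ needs precisely the syntactic-complexity observation you make explicit). One small imprecision: stage-preserving $\tria$-steps are not confined to stage $0$. A negation step, say from $\ulcorner \neg(\varphi \circ \chi) \urcorner$ to $\ulcorner \varphi \circ \chi \urcorner$, can preserve the least stage at a successor level as well, since both sentences may first become determined at the same application of the jump. Your lexicographic rank absorbs this case anyway ($c$ drops while $lev$ does not increase), so the proof stands; just state the decreasing-rank claim uniformly as ``$lev$ never increases along $\tria$, and $c$ strictly decreases whenever $lev$ is preserved'' rather than tying the second coordinate to stage $0$ alone.
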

\begin{proof} Both implications are proved by ordinal induction. For the implication from left to right, fix $\alpha$ and assume that $\forall \beta < \alpha \forall \psi \in \sentt [\psi \in T^{WK}_{\beta} \rightarrow \psi$ is well-founded]. The claim then is that $\forall \psi \in \sentt [\psi \in T^{WK}_{\alpha} \rightarrow \psi$ is well-founded]. For $\alpha$ being a limit ordinal, the claim follows trivially from the inductive assumption. For $\alpha$ of the form $\beta + 1$, the claim is also easily obtained by analysing cases from Definition \ref{wkjump}. For example, if $\psi = \ulcorner Qv \varphi(v) \urcorner$, then for every $t \in Tm^c$, either $\varphi(t)$ or $\neg \varphi(t)$ belongs to $T^{WK}_{\beta}$ and so by the inductive assumption for every $t \in Tm^c$, $\varphi(t)$ is well-founded, from which it follows that $Qv \varphi(v)$ is well-founded.

In the proof of the opposite implication, we show that 

\begin{center}
$\forall \alpha \forall \psi \in \sentt [(\psi$ is wf $\wedge~Ord(\psi) = \alpha) \rightarrow \psi~is$ \textit{WK-grounded}].
\end{center}
Fixing $\alpha$ and assuming that this holds for every $\psi$ such that $Ord(\psi) < \alpha$, take an arbitrary sentence $\psi$ such that $Ord(\psi) = \alpha$. If $\alpha = 0$, then $\psi$ is of the form `$s_1 = s_2$' or it has the form `$T(t)$' where $val(t)$ is not a sentence. Then either $\psi$ or $\neg \psi$ belongs to $T^{WK}_1$ and thus $\psi$ is WK-grounded. If $\alpha$ is a limit ordinal, then $\psi$ has the form $Qv \varphi(v)$ and since by the inductive assumption all sentences of the form $\varphi(t)$ are WK-grounded, it is easy to observe that $\psi$ is also WK-grounded. The case of $\alpha = \beta +1$ is also straightforward and we leave it to the reader.
\end{proof}

The next lemma characterises the bounds for ordinals of sentences determined as true in the Weak Kleene hierarchy.

\begin{lemma}\label{bounds} \hfil
\begin{itemize}
\item[(a)] $\forall \varphi \in \sentt \forall \alpha \big( \varphi \in T^{WK}_{\alpha +1} \setminus T^{WK}_{\alpha} \rightarrow Ord(\varphi) \geq \alpha \big)$.
\item[(b)] $\forall \varphi \in \sentt \forall \alpha > 0 \big( \varphi \in T^{WK}_{\alpha} \rightarrow Ord(\varphi) < \omega \cdot \alpha \big)$.
\end{itemize}
\end{lemma}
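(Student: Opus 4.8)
The two parts are a lower and an upper bound relating the stage at which a sentence becomes verified in the Weak Kleene hierarchy to its $Ord$, and I would prove each by transfinite induction, in every case splitting according to the clause of $J^{WK}$ (Definition \ref{wkjump}) responsible for $\varphi$. Throughout, $Ord$ is defined on all the sentences in question, since every member of $T^{WK}_{\alpha}$ is WK-grounded and hence well-founded by Lemma \ref{grwf}. The structural feature I would lean on is the one distinguishing the Weak from the Strong Kleene jump: a compound sentence can be placed into $T^{WK}_{\gamma+1}$ only once \emph{all} of its immediate constituents have already become determined at stage $\gamma$.

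For part (a) I would \emph{not} run the induction on $\alpha$ with (a) as its own hypothesis; this breaks down at disjunctions. Instead I would prove the stronger uniform claim that every WK-grounded $\psi$ satisfies $Ord(\psi)\geq d(\psi)-1$, where $d(\psi)$ is the least stage at which $\psi$ is \emph{determined} (that is, $\psi\in T^{WK}_{d(\psi)}$ or $\neg\psi\in T^{WK}_{d(\psi)}$); one checks that $d(\psi)$ is always $0$ or a successor, so that ``$d(\psi)-1$'' makes sense. Since a true $\varphi$ entering at $\alpha+1$ has $d(\varphi)=\alpha+1$ (its negation, by consistency, never enters), part (a) is then immediate. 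The induction runs on $d(\psi)$. The atomic case is trivial, and the clauses for $T(t)$ and $\neg T(t)$ are direct and in fact tight: for $\psi=T(t)$ with $val(t)=\psi'$ one has $d(\psi)=d(\psi')+1$ and $Ord(\psi)=Ord(\psi')+1$, so the hypothesis on $\psi'$ transfers exactly. The main obstacle is the group $\varphi_1\vee\varphi_2$, $\neg(\varphi_1\wedge\varphi_2)$, $\exists v\,\theta$, $\neg\forall v\,\theta$, where the last constituent to become determined may be determined \emph{as false}; applying (a) alone to its true witness $\neg\chi$ loses a level because $Ord(\neg\chi)=Ord(\chi)+1$. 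The ``$-1$'' slack is exactly what repairs this: a sentence and its negation share a determinedness stage while their ordinals differ by one, so bounding the ordinal of a possibly false constituent by its \emph{determinedness} stage (not its entering stage) pays for the level negation costs. Concretely, for $\varphi=\varphi_1\circ\varphi_2$ one has $d(\varphi)=\max_i d(\varphi_i)+1$ and $Ord(\varphi)=\max_i Ord(\varphi_i)+1$, whence
\[ Ord(\varphi)=\max_i Ord(\varphi_i)+1\geq \max_i\big(d(\varphi_i)-1\big)+1 = d(\varphi)-1, \]
and the quantifier cases go the same way with a supremum over $t\in Tm^c$. One also checks that the atomic, $T(t)$ and $\neg T(t)$ clauses can never make $\varphi$ true for the first time at a stage $\lambda+1$ with $\lambda$ a limit, so no pathological limit subcase arises for them.

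Part (b) I would prove by a separate transfinite induction on $\alpha$. The limit case is immediate from $T^{WK}_{\lambda}=\bigcup_{\beta<\lambda}T^{WK}_{\beta}$ together with $\omega\cdot\beta<\omega\cdot\lambda$. The base case $\alpha=1$ must be isolated and rests on the sub-lemma that every purely arithmetical sentence, and hence every sentence obtained by a single jump over $Th(N)$, has \emph{finite} $Ord$ (each connective, negation and truth layer adds one, and an arithmetical quantifier is a supremum of instances of one fixed finite ordinal). For the successor step with $\alpha=\beta+1$ and $\beta\geq 1$, the clauses for $T(t)$, $\neg T(t)$, the connectives and the negated connectives only add a finite amount to ordinals already below $\omega\cdot\beta$, so they remain below $\omega\cdot\beta+\omega=\omega\cdot(\beta+1)$; the crucial case is the quantifiers, where Weak Kleene determinedness guarantees that \emph{every} instance $\theta(t)$ is already determined at stage $\beta$, so that each $Ord(\theta(t))<\omega\cdot\beta$ by the induction hypothesis and therefore $Ord(Qv\,\theta)=\sup_{t}\big(Ord(\theta(t))+1\big)\leq\omega\cdot\beta<\omega\cdot(\beta+1)$. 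The point of the factor $\omega$ is precisely to absorb the boundedly many non-quantifier layers that may be stacked between consecutive determinedness levels; combined with part (a), this pins $Ord(\varphi)$ between roughly $\alpha$ and $\omega\cdot\alpha$.
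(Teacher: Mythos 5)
Your part (b) is essentially the paper's own proof: induction on $\alpha$, with the base case $\alpha=1$ isolated (arithmetical sentences and sentences of the form $(\neg)T(t)$ having finite ordinal), the limit case trivial, and the successor case split by the outermost clause of $J^{WK}$, the factor $\omega$ absorbing the finitely many non-quantifier layers exactly as you say. Part (a) is where you genuinely diverge, and your route is the more robust one. The paper runs the induction on $\alpha$ with statement (a) itself as the hypothesis and displays only the $T(t)$ and $\forall x\,\psi(x)$ cases --- precisely the cases in which the constituent responsible for the delay is determined with the polarity to which (a) directly applies --- leaving the rest to the reader. Your diagnosis that the unstrengthened hypothesis breaks down for $\varphi_1\vee\varphi_2$ and $\exists v\,\theta$ is correct: there the bottleneck constituent $\chi$ may become determined \emph{as false} at stage $\alpha=\beta+1$, and applying (a) to $\neg\chi$ yields only $Ord(\neg\chi)\geq\beta$, hence $Ord(\chi)\geq\beta-1$ and $Ord(\varphi)\geq\beta$ when $\beta$ is a successor, one short of the required $\alpha$. (The negated cases $\neg(\varphi_1\wedge\varphi_2)$ and $\neg\forall v\,\theta$, which you group with these, in fact survive the naive induction because the outer negation restores the lost level; but that does not affect your argument.) Your strengthened invariant $Ord(\psi)\geq d(\psi)-1$, with $d$ the determinedness stage, is exactly the right repair --- it amounts to proving (a) simultaneously with its mirror image for $\neg\varphi$ stated in terms of $Ord(\varphi)$ --- and your bookkeeping identities $d(\varphi_1\circ\varphi_2)=\max_i d(\varphi_i)+1$ and $Ord(\varphi_1\circ\varphi_2)=\max_i Ord(\varphi_i)+1$ (reading $Ord$ as the strict supremum of the ordinals of the $\tria$-predecessors, which is also what the paper's own $T(t)$ case and Observation \ref{compord}(b) require) close the argument, including the observation that first entry never occurs at a limit stage. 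In short: your proof costs a slightly heavier induction hypothesis but buys a uniform treatment of all clauses, including the ones the paper's displayed argument does not actually reach.
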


Both (a) and (b) are proved by ordinal induction, with the bulk of the proofs devoted to analysing cases. \\

\noindent
\textbf{Proof of (a)}. Assume that $\forall \beta < \alpha \forall \varphi \in \sentt  \big( \varphi \in T^{WK}_{\beta +1} \setminus T^{WK}_{\beta} \rightarrow Ord(\varphi) \geq \beta \big)$.  Fix $\varphi \in T^{WK}_{\alpha +1}$ such that $\varphi \notin T^{WK}_{\alpha}$. Our claim is that $Ord(\varphi) \geq \alpha$. From now on, the proof proceeds by considering all possible forms of $\varphi$. Below we discuss just two cases, leaving the rest of them to the reader.

Case 1: $\varphi$ is of the form `$T(t)$'. Since $\varphi \in T^{WK}_{\alpha +1}$, we have: $val(t) \in T^{WK}_{\alpha}$. Since $\varphi \notin T^{WK}_{\alpha}$, we have also: $\forall \beta < \alpha~val(t) \notin T^{WK}_{\beta}$. Therefore $\alpha = 0$ or $\alpha$ is a successor number of the form $\beta + 1$ (no new sentences can be added on limit levels). If $\alpha = 0$, then obviously $Ord(\varphi) \geq \alpha$. If $\alpha = \beta + 1$, then we have: $val(t) \in T^{WK}_{\beta + 1}$ and $val(t) \notin T^{WK}_{\beta}$, so by our inductive assumption $Ord(val(t)) \geq \beta$, hence $Ord(T(t)) \geq \beta+1$ as required.

Case 2: $\varphi$ is of the form `$\forall x \psi(x)$'. Since $\varphi \in T^{WK}_{\alpha +1}$, we have: $\forall t \in Tm^c~\psi(t) \in T^{WK}_{\alpha}$. We now consider two possibilities: either  (i) there is a constant term $t$ such that $\psi(t) \in  T^{WK}_{\alpha}$ and $\psi(t)$ does not appear anywhere earlier in the hierarchy, or (ii) no such term exists. In the first case, $\alpha$ is a successor ordinal of the form $\beta+1$, so by the inductive assumption $Ord(\psi(t)) \geq \beta$, therefore $Ord(\varphi) \geq \alpha$ and the proof is done. In the second case $\alpha$ must be a limit ordinal.\footnote{If $\alpha$ were equal to $\beta + 1$, then by (ii) we would have: $\forall t \in Tm^c \psi(t) \in T^{WK}_{\beta}$ and thus $\varphi \in T^{WK}_{\alpha}$, contrary to the main assumption of the proof.} Then we obtain:
\begin{center}
$\forall \beta < \alpha \exists \gamma > \beta \big( \gamma < \alpha \wedge \exists t (\psi(t) \in T^{WK}_{\gamma +1} \wedge \psi(t) \notin T^{WK}_{\gamma}) \big).$
\end{center} 
Otherwise, fixing $\beta < \alpha$ and assuming that $\gamma$ with the above properties does not exist, we would have: $\forall t \in Tm^c \psi(t) \in T^{WK}_{\beta}$ and thus $\varphi \in T^{WK}_{\alpha}$, contrary to the assumption of the proof.

By the inductive assumption, it now follows that:
\begin{center}
$\forall \beta < \alpha \exists \gamma > \beta \big( \gamma < \alpha \wedge \exists t Ord(\psi(t)) \geq \gamma \big)$.
\end{center}
In effect, $sup \{Ord(\psi(t)): t \in Tm^c \} \geq \alpha$ and thus $Ord(\varphi) \geq \alpha$. $\Box$ \\

\noindent
\textbf{Proof of (b)}. Our inductive assumption is that $\forall \beta < \alpha \big( \beta > 0 \rightarrow \forall \varphi \in \sentt ( \varphi \in T^{WK}_{\beta} \rightarrow Ord(\varphi) < \omega \cdot \beta ) \big)$. Now, assuming that $\alpha > 0$ and fixing $\varphi \in T^{WK}_{\alpha}$, we claim that $Ord(\varphi) < \omega \cdot \alpha$.

It is easy to observe that for every arithmetical sentence $\psi$ (with no occurrence of the truth predicate), $Ord(\varphi) < \omega$. Moreover, if our fixed $\varphi$ belongs to $T^{WK}_{1}$, then $\varphi$ is either arithmetical or it is of the form `$T(t)$' or `$\neg T(t)$'. In all of these cases $Ord(\varphi) < \omega$, hence if $\alpha = 1$, then $Ord(\varphi) < \omega \cdot \alpha$.

If $\alpha$ is a limit ordinal, the result follows trivially from the inductive assumption. So assume that $\alpha = \beta + 1$ with $\beta > 0$. From now on, the proof proceeds by analysing all possible forms of $\varphi$. We will consider just two cases, leaving the rest of them to the reader.

Case 1: $\varphi$ is of the form $\neg T(t)$. Then $val(t)$ is not a sentence or $\neg val(t) \in T^{WK}_{\beta}$. In the first case $Ord(\varphi) = 1$ and we are done. In the second case by the inductive assumption, $Ord(\neg val(t)) < \omega \cdot \beta$, hence $Ord(\neg T(t)) < \omega \cdot (\beta + 1)$. 

Case 2: $\varphi$ is of the form $\forall x \psi(x)$. Then for all $t \in Tm^c$, $\psi(t) \in T^{WK}_{\beta}$ and so by the inductive assumption $\forall t \in Tm^c~Ord(\psi(t)) < \omega \cdot \beta$. Since by definition $Ord(\varphi) = sup \{Ord(\psi(t)): t \in Tm^c \}$, we have: $Ord(\varphi) \leq \omega \cdot \beta$. Therefore $Ord(\varphi) < \omega \cdot (\beta +1)$, which means that $Ord(\varphi) < \omega \cdot \alpha$. $\Box$

\begin{observation}\label{compord} \hfil
\begin{itemize}
\item[(a)] $\forall s, s' [s \trias s' \rightarrow \exists s_1 (s_1 \tria s' \wedge s \etrias s_1)]$,
\item[(b)] $\forall s \forall \alpha [Ord(s) = \alpha \rightarrow \forall \beta < \alpha \exists s' \trias s~Ord(s') = \beta]$.
\end{itemize}
\end{observation}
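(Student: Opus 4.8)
The plan is to dispatch (a) by unwinding the definition of the transitive closure, and to prove (b) by transfinite induction on $Ord(s)$, exploiting that $Ord$ is the rank function attached to the well-founded relation $\tria$. For (a), suppose $s \trias s'$. By definition of $\trias$ there is a finite chain $s' = a_0, a_1, \dots, a_n = s$ with $n \geq 1$ and $a_{i+1} \tria a_i$ for each $i < n$. Taking $s_1 := a_1$, the first link gives $s_1 \tria s'$, while the tail $a_1, \dots, a_n$ witnesses $s \etrias s_1$: if $n = 1$ then $s = s_1$ (the reflexive disjunct of $\etrias$), and if $n > 1$ then $s \trias s_1$. Hence (a) is immediate once the boundary case $n=1$ is noted.

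For (b), I would fix $s$ with $Ord(s) = \alpha$ and $\beta < \alpha$, and induct on $\alpha$ (equivalently, perform well-founded induction on $s$ along $\tria$, which is legitimate since $Ord$ is only defined on well-founded sentences). From $\beta < Ord(s)$ the defining supremum of Definition \ref{ord} forces an immediate predecessor $x \tria s$ with $Ord(x) \geq \beta$, and necessarily $Ord(x) < \alpha$. If $Ord(x) = \beta$, then $s' := x$ already works, since $x \tria s$ gives $x \trias s$. If $Ord(x) > \beta$, then $\beta < Ord(x) < \alpha$, so the induction hypothesis applied to $x$ furnishes some $s' \trias x$ with $Ord(s') = \beta$; transitivity of $\trias$, together with $x \tria s$, then yields $s' \trias s$, as required.

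The step I expect to be the crux is the limit case of (b). When $\alpha$ is a limit ordinal, the ordinals realized by the immediate predecessors of $s$ need only be cofinal in $\alpha$ and may skip over $\beta$ altogether, so there is in general no immediate predecessor of rank exactly $\beta$. This is precisely why the argument cannot stop at the immediate predecessors but must descend into one of larger rank and invoke the induction hypothesis there; part (a) -- or, more economically, plain transitivity of $\trias$ -- is what lets me reassemble the witness from $s' \trias x \tria s$. Beyond this, the only care needed is the routine bookkeeping that $Ord(x) < \alpha$ holds in both the successor and the limit case, which guarantees the induction hypothesis is available at $x$.
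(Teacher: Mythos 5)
Your proof is correct. Part (a) coincides with the paper's, which simply notes that the claim follows from the definition of $\trias$; your explicit chain $s'=a_0,\dots,a_n=s$ with the boundary case $n=1$ just spells that remark out. For part (b) you take a genuinely different route. The paper argues by contradiction: supposing no $s'\trias s$ has ordinal $\beta$, it selects the \emph{least} $\gamma>\beta$ realized by some $s'\etrias s$ and shows that $\gamma$ can be neither $0$ nor a successor nor a limit, each case clashing either with the minimality of $\gamma$ or with the standing assumption. Your argument is instead a direct transfinite (equivalently, well-founded) induction: locate an immediate predecessor $x\tria s$ with $Ord(x)\geq\beta$, stop if $Ord(x)=\beta$, and otherwise recurse into $x$ and transport the witness back via transitivity of $\trias$. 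The two arguments do the same work in mirror image --- your recursion bottoms out precisely at the configuration the paper isolates as its minimal counterexample --- but yours avoids the case analysis on the cofinality type of $\gamma$ and makes the role of the limit case transparent (as your closing paragraph explains), whereas the paper concentrates everything into a single minimality argument. One caveat common to both proofs: Definition \ref{ord} must be read as the \emph{strict} supremum, i.e.\ the least ordinal strictly above every $Ord(x)$ for $x\tria\psi$, as the paper's use of $Ord$ in Lemma \ref{bounds} confirms; your inequalities $Ord(x)\geq\beta$ and $Ord(x)<\alpha$ are exactly right under that reading and would not both be available under the naive supremum.
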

\begin{proof}
Part (a) follows directly from the definition of $\trias$. For an indirect proof of (b), assume that $Ord(s) = \alpha$ and let $\beta < \alpha$ be such that $\neg \exists s' \trias s~Ord(s') = \beta$. Choose the least ordinal $\gamma$ such that $\gamma > \beta$ and $\exists s' \etrias s~Ord(s') = \gamma$ (such an ordinal exists because $\alpha$ satisfies both conditions). Fixing $s'$ such that $Ord(s') = \gamma$, we obtain:
\begin{center}
$\gamma = sup\{Ord(s'': s'' \tria s' \}$.
\end{center}
Since $\gamma > \beta$, we have: $\gamma \ne 0$. We observe that $\gamma$ cannot be a successor number, because if $\gamma = \delta +1$, then $\delta \in \{Ord(s''): s'' \tria s' \}$, which contradicts the choice of $\gamma$ as the least ordinal with the stipulated property. It follows that $\gamma$ is a limit ordinal. Therefore there is an ordinal $\delta > \gamma$ and $s''$ such that $Ord(s'') = \delta$ and $s'' \tria s'$, which again contradicts the choice of $\gamma$.
\end{proof}

The next two sections contain a proof by cases of Lemma \ref{main2}. Assuming that $(N, \twk)$ defines a truth predicate of $KF$, we take into account the possible closure ordinals of $(N, \twk)$ (see Definition \ref{clord}). By Theorem \ref{theclord} there are two possibilities: either this ordinal is smaller than $\omck$ or it is $\omck$. It transpires that in both cases Lemma \ref{main2} can be proved, although the means used in the proofs are quite different.

\subsection{Case 1: $ClOrd_{(N, \twk)} < \omega_1^{CK}$}

The key observation is that in this case ordinals of well-founded sentences can be restricted by a fixed number below $\omck$.

\begin{observation}\label{ordpsi} $\exists \kappa < \omck~\forall \psi \in \sentt \big(\psi$ is well-founded $\rightarrow Ord(\psi) < \kappa \big)$.
\end{observation}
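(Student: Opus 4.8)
The plan is to read off the bound directly from the closure ordinal of the Weak Kleene hierarchy, combined with Lemma \ref{bounds}(b). Write $\kappa_0 := ClOrd_{(N,\twk)}$, so that by the case assumption $\kappa_0 < \omck$ and, by Definitions \ref{lfp} and \ref{clord}, $\twk = T^{WK}_{\kappa_0}$. I claim that $\kappa := \omega \cdot \kappa_0$ witnesses the observation.

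First I would translate well-foundedness into membership in the hierarchy. By Lemma \ref{grwf} a sentence $\psi$ is well-founded iff it is $WK$-grounded, i.e.\ iff $\psi \in \twk$ or $\neg\psi \in \twk$. Fix such a $\psi$; since $\twk = T^{WK}_{\kappa_0}$, either $\psi \in T^{WK}_{\kappa_0}$ or $\neg\psi \in T^{WK}_{\kappa_0}$. Note that $\kappa_0 \geq 1$ (already $T^{WK}_0 = Th(N)$ fails to be closed under $J^{WK}$, since $J^{WK}$ adds non-arithmetical sentences of the form $\ulcorner T(t) \urcorner$), so Lemma \ref{bounds}(b) is applicable with $\alpha = \kappa_0$.

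In the first case Lemma \ref{bounds}(b) gives $Ord(\psi) < \omega \cdot \kappa_0 = \kappa$ at once. In the second case the same lemma gives $Ord(\neg\psi) < \omega\cdot\kappa_0$, and it remains to compare $Ord(\psi)$ with $Ord(\neg\psi)$. Inspecting Definition \ref{tria}, the unique $\tria$-predecessor of $\ulcorner\neg\psi\urcorner$ is $\psi$, so by Definition \ref{ord} we have $Ord(\neg\psi) = sup\{Ord(\psi)\} = Ord(\psi)$; hence again $Ord(\psi) < \kappa$. Thus $Ord(\psi) < \kappa$ for every well-founded $\psi$.

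The only remaining point — the one requiring a little care rather than routine bookkeeping — is that $\kappa = \omega\cdot\kappa_0$ is genuinely below $\omck$. Here I would invoke the fact that $\omck$ is closed under ordinal multiplication (equivalently, that the recursive ordinals form an initial segment closed under the primitive recursive ordinal functions): from $\kappa_0 < \omck$ it follows that $\omega\cdot\kappa_0 < \omck$. With $\kappa < \omck$ established and $Ord(\psi) < \kappa$ shown uniformly, the observation follows. I expect this closure property of $\omck$ to be the sole non-mechanical ingredient; everything else is a direct appeal to Lemmas \ref{grwf} and \ref{bounds}.
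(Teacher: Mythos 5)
Your proposal is correct and follows essentially the same route as the paper's own proof: take $\kappa = \omega \cdot ClOrd_{(N,\twk)}$ and combine Lemma \ref{grwf} with Lemma \ref{bounds}(b). You merely make explicit a few points the paper leaves tacit (the case where only $\neg\psi$ enters the hierarchy, and the closure of $\omck$ under multiplication), which is fine but not a different argument.
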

\begin{proof}
Let $\alpha$ be the closure ordinal of $(N, \twk)$. Define $\kappa$ as $\omega \cdot \alpha$. Since $\alpha < \omega_1^{CK}$, so is $\kappa$. Let $\psi$ be well-founded, so by Lemma \ref{grwf} either $\psi$ or its negation belongs to $T^{WK}_{\beta}$ for some $\beta < \alpha$. Then by Lemma \ref{bounds}(b), $Ord(\psi) < \omega \cdot \beta$ and thus $Ord(\psi) < \kappa$.
\end{proof}

The proof of Lemma \ref{main2} employs the following theorem.\footnote{For the original formulation of the theorem, see \cite[p. 715-716]{Kri75}. For the proof, see \cite[p. 108]{Schi15}, Theorem 7.2.8.}

\begin{theorem}[Kripke 1975]\label{kripthe} Let $S$ be an arbitrary set of natural numbers. The following conditions are equivalent:
\begin{itemize}
\item $S$ is $\Delta_1^1$,
\item there is a formula $\varphi(x) \in \lt$ such that $\varphi(x)$ is total in $(N, \tsk)$ and $S = \{n: (N, \tsk) \models T(\varphi(n)) \}$.
\end{itemize}
\end{theorem}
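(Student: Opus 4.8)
The plan is to connect Kripke's construction to the classical theory of positive arithmetical inductive definitions and the analytical hierarchy. The operator $J^{SK}$ of Definition \ref{skjump} is arithmetical and monotone (positive) in its set argument $S$, and $\tsk$ is its least fixed point, reached at the closure ordinal $\omck$ by Theorem \ref{theclord}(a). By the Spector--Gandy and Moschovakis theory of such operators, the membership relation $x \in \tsk$ is $\Pi^1_1$; moreover it is $\Pi^1_1$-complete, and its stage-comparison relation is $\Pi^1_1$ with norms bounded by $\omck$. These are the two structural facts I would isolate first, since both directions of the theorem rest on them.

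For the implication from the existence of a total $\varphi$ to $S \in \Delta^1_1$ (the easier half), first observe that $S = \{n : \goed{\varphi(\overline n)} \in \tsk\}$ is the preimage of the $\Pi^1_1$ set $\tsk$ under the recursive map $n \mapsto \goed{\varphi(\overline n)}$, hence $\Pi^1_1$. If $\varphi$ is total in $(N,\tsk)$, then for each $n$ at least one of $\varphi(\overline n), \neg\varphi(\overline n)$ lies in $\tsk$, while consistency of the least fixed point (Fact \ref{basic}(iii), applicable since $(N,\tsk)\models KF$ and every $\varphi(\overline n)$ is SK-grounded) forbids both. Thus the complement of $S$ is exactly $\{n : \goed{\neg\varphi(\overline n)} \in \tsk\}$, again $\Pi^1_1$ by the same preimage argument, and therefore $S$ is $\Delta^1_1$.

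The converse is the substantive half. Given $S \in \Delta^1_1$, both $S$ and its complement are $\Pi^1_1$, and by the $\Pi^1_1$-completeness of $\tsk$ I would fix recursive reductions, i.e.\ formulas $\alpha(x), \beta(x) \in \lt$ with $n \in S \Leftrightarrow \goed{\alpha(\overline n)} \in \tsk$ and $n \notin S \Leftrightarrow \goed{\beta(\overline n)} \in \tsk$. The goal is to synthesise from these a single $\theta(x)$ that is total in $(N,\tsk)$ and has $S$ as its grounded-true extension. The naive route of taking a Boolean combination of $T(\alpha(x))$ and $T(\beta(x))$ fails: each witness is only \emph{grounded-true or undetermined}, so on $S$ the pair of Strong Kleene values is $(t,u)$ and off $S$ it is $(u,t)$, and since all Strong Kleene connectives are monotone in the information ordering, no such connective can send $(t,u)\mapsto t$ and $(u,t)\mapsto f$ simultaneously. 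To overcome this I would instead exploit the hyperarithmetical content of $\Delta^1_1$: by Spector boundedness the well-founded witnesses for $S$ and for $\overline S$ have ranks uniformly bounded by a single $\kappa < \omck$, so membership in $S$ is decided by a well-founded recursion of length $<\omck$. Encoding this recursion by a diagonal (self-referential) formula in the sense of Definition \ref{diag}, one obtains a $\theta(x)$ whose value at each $\overline n$ is forced to stabilise by some stage below $\kappa$; as $\kappa < \omck$ is strictly below the closure ordinal of the Strong Kleene hierarchy, every $\theta(\overline n)$ becomes grounded, so $\theta$ is total and defines $S$.

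The main obstacle is precisely this last construction. The representations delivered by $\Pi^1_1$-completeness are intrinsically one-sided (grounded only on the ``in'' side), so the real work is to convert a pair of such one-sided representations into a genuinely two-valued formula. I expect the decisive steps to be (i) the boundedness argument pinning the relevant ranks below a fixed $\kappa < \omck$, which uses $S \in \Delta^1_1$ rather than merely $S \in \Pi^1_1$, and (ii) the diagonalisation turning the bounded recursion into an $\lt$-formula grounded everywhere. That the Strong Kleene closure ordinal is exactly $\omck$ (Theorem \ref{theclord}(a)) is what guarantees a recursion of recursive length always terminates within the hierarchy, and is therefore the linchpin making totality achievable.
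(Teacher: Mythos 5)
This is one of the places where a comparison with ``the paper's own proof'' is not really possible: the paper does not prove Theorem~\ref{kripthe} at all, but imports it from the literature, citing Kripke's original formulation and Schindler's Theorem~7.2.8 for the proof. Judged on its own terms, your reconstruction follows the standard argument and is sound in outline. The easy direction is complete and correct: $S$ and (via totality plus the consistency of the least fixed point) its complement are both preimages of the $\Pi_1^1$ set $\tsk$ under recursive maps, hence $S$ is $\Delta_1^1$. Your observation that a Strong Kleene Boolean combination of the two one-sided $\Pi_1^1$-completeness witnesses cannot work, because monotone connectives cannot send $(t,u)\mapsto t$ and $(u,t)\mapsto f$, is exactly the right diagnosis of why the converse is nontrivial. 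For that converse you correctly isolate the two load-bearing ingredients: Spector boundedness (which is where $\Delta_1^1$ rather than mere $\Pi_1^1$ is used, pinning the relevant stages below a single $\kappa<\omck$) and a diagonal formula, in the style of Definition~\ref{diag} and closely analogous to the paper's own $\psi(s,x)$ of Definition~\ref{psi}, that internalises the hierarchy up to $\kappa$. What remains only asserted is the verification that this diagonal formula is \emph{total} in $(N,\tsk)$ and correct at every notation: that requires an effective transfinite induction along the notations below $\kappa$, showing at each stage that both the formula and its negation become grounded (the negative half being the delicate part, since ``$x\notin T^{SK}_{\beta}$'' must itself be decided inside the fixed point). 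This is a genuine piece of work, not a routine check, and your proposal describes it rather than carries it out; but it is the same work done in Schindler's proof and mirrored by the paper's Lemmas~\ref{monoton} and~\ref{leastfp}, so the plan is the right one.
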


In view of Theorem \ref{kripthe}, it is enough to show that the set of WK-grounded sentences (that is, the set of well-founded sentences) is $\Delta_1^1$. 

\begin{lemma}\label{delta11} The set of sentences well-founded in $(N, \twk)$ is $\Delta_1^1$.
\end{lemma}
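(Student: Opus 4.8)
The plan is to produce two definitions of the set of well-founded sentences, one $\Pi_1^1$ and one $\Sigma_1^1$, each defining exactly this set; since the set is then simultaneously $\Pi_1^1$ and $\Sigma_1^1$, it is $\Delta_1^1$. The preliminary observation is that the relations $\tria$ and $\etrias$ of Definition \ref{tria} are arithmetical: the former by fiat, and the latter as the reflexive--transitive closure of an arithmetical relation, which is expressible by asserting the existence of a finite $\tria$-path. Hence every matrix occurring below is arithmetical, and the only second-order quantifier in each definition will be a single function quantifier. The $\Pi_1^1$ definition is the routine one and carries no real content: a sentence $\psi$ \emph{fails} to be well-founded exactly when there is an infinite $\tria$-descending chain issuing from $\psi$, i.e. $\exists g \in \omega^\omega\,\big[\, g(0) \etrias \psi \wedge \forall n\; g(n+1) \tria g(n)\,\big]$, which is $\Sigma_1^1$; so well-foundedness is $\Pi_1^1$. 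This merely restates the definition of well-foundedness of $\tria$ on $\{x : x \etrias \psi\}$.

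The substance lies in the $\Sigma_1^1$ definition, and here the Case~1 hypothesis is essential. By Observation \ref{ordpsi} there is a single $\kappa < \omck$ with $Ord(\psi) < \kappa$ for every well-founded $\psi$. Since $\kappa < \omck$, it is a recursive ordinal, so I can fix once and for all a recursive well-ordering $\prec$ on $\omega$ of order type $\kappa$. I then claim that, for every $\psi \in \sentt$, the sentence $\psi$ is well-founded iff there is an order-preserving map of $\big(\{x : x \etrias \psi\}, \tria\big)$ into the well-ordering $\prec$; formally, iff $\exists f \in \omega^\omega\,\forall x\,\forall y\,\big[(x \tria y \wedge y \etrias \psi) \imp f(x) \prec f(y)\big]$. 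Since $\prec$, $\tria$ and $\etrias$ are all arithmetical, the matrix is arithmetical and the whole statement is $\Sigma_1^1$.

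Both directions of this claim are short. If such an $f$ exists, then any infinite $\tria$-descending chain below $\psi$ would be carried by $f$ to an infinite $\prec$-descending chain, contradicting the well-foundedness of $\prec$; hence $\psi$ is well-founded. Conversely, if $\psi$ is well-founded then by Observation \ref{ordpsi} the rank $Ord$, which is strictly $\tria$-increasing, maps $\{x : x \etrias \psi\}$ into $\kappa$; composing $Ord$ with the order isomorphism between $\kappa$ and the field of $\prec$ yields a total $f \in \omega^\omega$ (extended arbitrarily off $\{x : x \etrias \psi\}$) witnessing the existential. The $\Pi_1^1$ and $\Sigma_1^1$ formulas thus pick out the same set, which is therefore $\Delta_1^1$.

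The only delicate point, and the place where Case~1 really enters, is the $\Sigma_1^1$ direction through Observation \ref{ordpsi}: without a \emph{uniform} recursive bound $\kappa$ on the ranks one could only embed into well-orderings whose order types are cofinal in $\omck$, and no single recursive $\prec$ would suffice---indeed, well-foundedness of an arithmetical relation is $\Pi_1^1$-complete in general, so the $\Sigma_1^1$ side must fail absent such a bound. Two smaller checks complete the argument: that $Ord$ is genuinely strictly increasing along $\tria$ (so that it furnishes an order-preserving, not merely monotone, map, as in the computations underlying Lemma \ref{bounds}), and that the witness $f$ need only \emph{exist} as a real, its non-effectiveness being irrelevant to the truth of the $\Sigma_1^1$ statement.
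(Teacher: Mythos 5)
Your proof is correct and follows essentially the same route as the paper's: the $\Pi_1^1$ side is the trivial reformulation of well-foundedness (your infinite $\tria$-descending chains are the paper's unbounded paths), and the $\Sigma_1^1$ side invokes Observation \ref{ordpsi} for a uniform recursive bound $\kappa < \omck$ and then witnesses well-foundedness by a strictly $\tria$-increasing map into a well-order of type $\kappa$, your fixed recursive $\prec$ playing the role of the paper's quantification over ordinal notations below $\kappa$. The only point worth recording is the one you flag yourself: for the rank to be strictly increasing along $\tria$ one should read Definition \ref{ord} as $Ord(\psi)=\sup\{Ord(x)+1 : x \tria \psi\}$, which is clearly what is intended (and is what Lemma \ref{bounds}(a) actually uses).
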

\begin{proof}
The $\Pi_1^1$ formulation is straightforward. For starters, define `$X$ is a path on $y$' as the conjunction of the following four arithmetical formulas with one second-order free variable:
\begin{itemize}
\item $\forall x \in X \exists a, b (b \etrias y \wedge x = (a,b))$
\item $(0, y) \in X$
\item $\forall a,b,a',b' ((a,b) \in X \wedge (a',b') \in X \rightarrow b \etrias b' \vee b' \etrias b)$ 
\item $\forall a, b ((a+1, b) \in X \rightarrow \exists c \tria b (a,c) \in X) $.
\end{itemize}
Then we can express the well-foundedness of an arbitrary $\psi$ by means of the following $\Pi_1^1$ formula:
\begin{center}
$\forall X [X$ is a path on $\psi \rightarrow \exists n \forall x \in X x < n]$.
\end{center}

For a $\Sigma_1^1$ formulation, let $\kappa$ be an ordinal smaller that $\omck$ whose existence is guaranteed by Observation \ref{ordpsi}. Then the well-foundedness of an arbitrary $\psi$ can be expressed by means of the following $\Sigma_1^1$ formula:
\begin{center}
$\exists f \exists \alpha < \kappa \big(f$ is a surjection mapping $\{x: x \etrias \psi \}$ onto $\alpha \wedge \forall x, y \etrias \psi (x \tria y \rightarrow f(x) < f(y)) \big)$.
\end{center}
Note that since $\kappa < \omck$, we can treat the quantification over ordinals as quantification over ordinal notations.
\end{proof}

Now we immediately obtain the proof of Lemma \ref{main2}. \\

\noindent
\textbf{Proof of Lemma \ref{main2}} Let $\tau(x)$ be a $KF$ truth predicate in $(N, \twk)$. By Lemma \ref{delta11} and Theorem \ref{kripthe}, let $G(x)$ be a formula total in $(N, \tsk)$ such that the set of WK-grounded sentences can be characterised as  $\{\psi: (N, \tsk) \models T(G(\psi)) \}$. The totality condition means that for every $t$, the formula `$G(t)$' is SK-grounded and so the conditions (a) and (b) of Lemma \ref{main2} follow easily from Fact \ref{basic}(ii) and (iii). $\Box$

\subsection{Case 2: $ClOrd_{(N, \twk)} = \omega_1^{CK}$}

In this case, the reasoning from the previous subsection is clearly inapplicable, since by Lemma \ref{bounds}(a) we cannot restrict the ordinals of well-founded formulas by any ordinal below $\omck$. In effect, our strategy here will be quite different. Indeed, now the proof of Lemma \ref{main2} is based on the insight that well-founded sentences form a structure which is complex enough to permit us (in the presence of the $KF$ truth predicate) to reconstruct in $(N, \twk)$ various Strong Kleene model-theoretic constructions, including the Kripkean construction of the least fixed-point model.

Let us start by the following definition. Intuitively, the formula `$\psi(s, x)$' defined below is designed to express that $x$ is a sentence determined as true at the ordinal level $s$ of the Strong Kleene least fixed-point construction. However, ultimately instead of ordinals we will be using the well-founded sentences of $\lt$.

\begin{definition}\label{psi} Let $\psi$ be the diagonal formula satisfying (provably in PAT) the condition:
\begin{align}
	\psi(s, x) \equiv &\ \ s \in \sentt~\wedge  \nonumber\\
	 &\ \  \Big( x \in \sentpa \wedge T(x)\nonumber\\
	\vee&\ \  x = \ulcorner T(t) \urcorner \wedge \exists s' \vartriangleleft s T(\psi(s', val(t))) \nonumber\\
	\vee&\ \  x = \ulcorner \neg T(t) \urcorner \wedge \big( \exists s' \vartriangleleft s T(\psi (s', \neg val(t))) \vee \neg \sentt(val(t)) \big) \nonumber\\
	\vee&\ \  x = \ulcorner \neg \neg \varphi \urcorner \wedge  \exists s' \vartriangleleft s T ( \psi (s', \varphi))  \nonumber\\
	\vee&\ \  x = \ulcorner \varphi \circ \chi \urcorner \wedge  \exists s's'' \vartriangleleft s \big( T ( \psi (s', \varphi)) \circ T ( \psi (s'', \chi)) \big) \nonumber\\
		\vee&\ \  x = \ulcorner \neg (\varphi \circ \chi ) \urcorner \wedge  \exists s's'' \vartriangleleft s \big( T ( \psi (s', \neg \varphi)) \circ_d T ( \psi (s'', \neg \chi)) \big) \nonumber\\
				\vee&\ \  x = \ulcorner Qv \varphi  \urcorner \wedge  \exists s' \vartriangleleft s Qa T ( \psi (s', \varphi(a))) \nonumber\\
\vee&\ \  x = \ulcorner \neg Qv \varphi  \urcorner \wedge  \exists s' \vartriangleleft s Q_d a T ( \psi (s', \neg \varphi(a))) \Big) \nonumber\
\end{align}

\end{definition}

In what follows we assume that $\tau(x)$ is a $KF$ truth predicate in $(N, \twk)$.  Denoting by `$F(s, x)$' the formula on the right side of the biconditional in Definition \ref{psi}, we obtain the following corollary:

\begin{corollary}\label{psiwb} \hfil
\begin{itemize}
\item $(N, \twk) \models \tau(\psi(s, x)) \equiv \tau(F(s, x))$,
\item $(N, \twk) \models \tau(F(s, x)) \equiv F^{\tau}(s, x)$.
\end{itemize}
\end{corollary}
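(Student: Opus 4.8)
The plan is to read both biconditionals straight off the two tools assembled in the Preliminaries: Lemma~\ref{diagkf} delivers the first and Observation~\ref{positive} delivers the second, so nothing new is needed beyond matching the notation of Definition~\ref{psi} to these results. For the first, I would begin by noting that, by the way $\psi$ is manufactured in Definition~\ref{psi}, the formula $F(s,x)$ is precisely $\varphi(s,x,\overline{\ulcorner \psi(s,x) \urcorner})$ for the formula $\varphi(s,x,y)$ obtained from the displayed right-hand side by putting the placeholder $y$ wherever the code of $\psi$ enters the substitution terms $\psi(s',\ldots)$. Hence $\psi(s,x)$ is the diagonal formula for $\varphi(s,x,y)$ in the sense of Definition~\ref{diag}, and Lemma~\ref{diagkf}(a) yields $KF \vdash T(\psi(s,x)) \equiv T(F(s,x))$ with $s,x$ free. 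Since $\tau(x)$ is a $KF$ truth predicate in $(N,\twk)$, the set $\{n : (N,\twk) \models \tau(n)\}$ interprets $T$ as a model of $KF$; because the term $\psi(s,x)$ computes the code of the relevant substitution instance independently of how $T$ is read, pushing this provable equivalence through that model gives $(N,\twk) \models \tau(\psi(s,x)) \equiv \tau(F(s,x))$, the first item.

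For the second biconditional, I would first verify that $F(s,x)$ is positive in the sense of Observation~\ref{positive}. Scanning the disjuncts of $F$, every occurrence of the truth predicate has the form $T(x)$ or $T(\psi(s',\ldots))$ and stands under no logical negation at all; the negation signs that appear are either arithmetical (as in $\neg \sentt(val(t))$) or sit inside coding operations ($x = \ulcorner \neg T(t) \urcorner$, the term $\neg val(t)$, and so on) and so are not negations of subformulas carrying a live $T$, while the connectives $\circ,\circ_d$ contribute only $\wedge$ or $\vee$. Thus $F$ is positive. Since Observation~\ref{positive} is stated for sentences, I would apply it instance-wise: for each $s,x$ the sentence $F(\overline{s},\overline{x})$ is positive, so $(N,\twk) \models \tau(F(\overline{s},\overline{x})) \equiv (F(\overline{s},\overline{x}))^{\tau}$, and since numeral substitution commutes with the replacement of $T$ by $\tau$ this is exactly $\tau(F(\overline{s},\overline{x})) \equiv F^{\tau}(\overline{s},\overline{x})$; closing universally over $s,x$ gives the second item.

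Neither half is deep, and I expect the only genuine obstacle to lie in the self-reference bookkeeping behind the first part. The crucial subtlety is that one must invoke Lemma~\ref{diagkf}, not merely the $PAT$-provable equivalence $\psi(s,x) \equiv F(s,x)$ of Definition~\ref{psi}: as the footnote to that lemma warns, $PAT$-equivalence need not pass through $T$. What makes Lemma~\ref{diagkf}(a) applicable verbatim is that the recursive calls $T(\psi(s',\ldots))$ inside $F$ are realised honestly through the substitution function applied to the diagonal code of $\psi$, so that $F(s,x)$ really is the substituted formula $\varphi(s,x,\overline{\ulcorner \psi \urcorner})$ rather than something only provably equivalent to it.
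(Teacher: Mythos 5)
Your proposal is correct and follows exactly the paper's own route: the paper likewise derives the first item from Lemma~\ref{diagkf} and the second from Observation~\ref{positive} together with the positivity of the formula. You merely spell out the bookkeeping (identifying $F(s,x)$ as the substituted instance $\varphi(s,x,\overline{\ulcorner\psi\urcorner})$ and checking positivity disjunct by disjunct) that the paper leaves implicit.
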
 

The first part follows by Lemma \ref{diagkf}, the second by Observation \ref{positive} and the fact that $\psi(s, x)$ is positive. In effect, when working in $(N, \twk)$, we can always move freely between `$\tau(\psi(s, x))$' and the result of substituting `$\tau(t)$' for all the occurrences of `$T(t)$' in  `$F(s, x)$'.

We write `$s \in D$' ($s$ is determined) as an abbreviation of `$T(s) \vee T(\neg s)$'. We denote by `$D^{WK}$' the set of WK-grounded sentences. Now we formulate the following basic observation.

\begin{observation}\label{ari}
For every $\varphi \in \sentpa$, for every $s \in D^{WK} \big( (N, \twk) \models \tau(\psi(s, \varphi))$ iff $\varphi \in Th(N) \big)$.
\end{observation}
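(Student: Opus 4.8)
The plan is to reduce the claim to a statement about the auxiliary formula $F^\tau$ and then prove it by induction on the logical complexity of the arithmetical sentence $\varphi$, uniformly over all WK-grounded $s$. First I would invoke Corollary \ref{psiwb} to replace the target $(N,\twk)\models\tau(\psi(s,\varphi))$ by the equivalent $(N,\twk)\models F^\tau(s,\varphi)$, so that it suffices to show: for every WK-grounded $s$, $(N,\twk)\models F^\tau(s,\varphi)$ iff $\varphi\in Th(N)$. Note that a WK-grounded $s$ is in particular a sentence, so the conjunct $s\in\sentt$ of $F$ is satisfied throughout.

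The implication from right to left is uniform and needs no induction. If $\varphi\in Th(N)$ then, since $\tau(x)$ is a $KF$ truth predicate and $KF$ proves full disquotation for arithmetical sentences, $(N,\twk)\models\tau(\varphi)$. Because $\varphi\in\sentpa$ holds (it is a true arithmetical statement about $\varphi$), the very first disjunct $x\in\sentpa\wedge T(x)$ of $F$, read under $\tau$, is satisfied at $x=\varphi$; hence $(N,\twk)\models F^\tau(s,\varphi)$ for every $s\in\sentt$.

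For the converse I would argue by induction on the complexity of $\varphi$. Suppose $(N,\twk)\models F^\tau(s,\varphi)$; then one of the disjuncts of $F^\tau$ holds. If it is the first disjunct, then $(N,\twk)\models\tau(\varphi)$ and arithmetical disquotation gives $\varphi\in Th(N)$ at once. Otherwise a \emph{structural} disjunct fires, matching the principal connective of $\varphi$ (e.g.\ $\varphi=\chi_1\circ\chi_2$, $\varphi=\neg(\chi_1\circ\chi_2)$, $\varphi=Qv\,\chi$, etc.). Each such disjunct quantifies its witnesses $s',s''$ as $\vartriangleleft$-predecessors of $s$; since $s$ is WK-grounded, hence well-founded by Lemma \ref{grwf}, and well-foundedness is inherited along $\vartriangleleft$, every such $s',s''$ is again WK-grounded. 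The sub-sentences appearing inside ($\chi_1,\chi_2$, their negations, $\chi$, or instances $\chi(a)$ with $a\in Tm^c$) are arithmetical of strictly smaller complexity, so after converting $\tau(\psi(s',\cdot))$ back to $F^\tau(s',\cdot)$ by Corollary \ref{psiwb} the induction hypothesis evaluates each of them as membership in $Th(N)$.

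The step I expect to be the main obstacle — or at least the one requiring the most care — is ruling out that a structural disjunct \emph{over-fires}, i.e.\ becomes true while $\varphi\notin Th(N)$: for a compound arithmetical $\varphi$ both the first disjunct and exactly one structural disjunct are form-applicable, so I must check they cannot disagree. The remedy is a routine appeal to the compositionality of truth in the standard model (clauses for $\wedge,\vee$; De Morgan for the negated compounds; and the fact that every element of $N$ is named by a constant term for the quantifier clauses): combined with the induction hypothesis this shows that whenever a structural disjunct holds, its witnessed sub-sentences lie in $Th(N)$ in exactly the pattern that forces $\varphi\in Th(N)$. Conversely the structural disjunct may \emph{under}-fire when $s$ has no $\vartriangleleft$-predecessors, but this is harmless because the first disjunct already captures $\varphi\in Th(N)$ correctly. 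Hence the disjunction $F^\tau(s,\varphi)$ holds iff $\varphi\in Th(N)$, completing the induction.
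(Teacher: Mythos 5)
Your argument is correct, and the right-to-left half coincides with the paper's (arithmetical disquotation under $\tau$ makes the first disjunct of $F^{\tau}$ fire, then Corollary \ref{psiwb} converts back). For the converse, however, you induct on a different parameter than the paper does. The paper takes a minimal counterexample with respect to $Ord(s)$: it fixes the least $\alpha$ such that some $s$ with $Ord(s)=\alpha$ and some $\varphi \notin Th(N)$ satisfy $(N, \twk) \models \tau(\psi(s, \varphi))$, notes that only a structural disjunct can be responsible, and extracts a witness $s' \tria s$ together with a false arithmetical subsentence $\chi$, contradicting minimality because $Ord(s') < \alpha$. You instead induct on the logical complexity of $\varphi$, uniformly in $s$. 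This is available precisely because $\varphi$ is arithmetical, hence of finite, $T$-free syntactic build-up; the ordinal induction on $Ord(s)$ is the technique the paper needs anyway for Lemma \ref{leastfp}, where $\varphi$ may contain the truth predicate and no complexity induction is possible. The two arguments also use the hypothesis $s \in D^{WK}$ differently: the paper needs it so that $Ord(s)$ is defined at all, whereas you need it only to keep the witnesses $s', s''$ within the scope of your induction hypothesis --- and since your induction is on $\varphi$ alone, your left-to-right direction would in fact go through for arbitrary $s \in \sentt$, which is slightly more than the observation asserts. Beyond that, both proofs reduce to the same routine compositionality check in $N$, so neither buys a substantive simplification; yours is marginally more elementary here, the paper's is more uniform with the rest of Section 5.
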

\begin{proof} For the implication from right to left, observe that since $\tau(x)$ is a $KF$ truth predicate, for any $\varphi \in Th(N)$ we will have $(N, \twk) \models \varphi \in \sentpa \wedge \tau(\varphi)$. Then by Corollary \ref{psiwb} together with the definition of $\psi(s,x)$ it follows that $(N, \twk) \models \tau(\psi(s, \varphi))$.

For the opposite implication, let $\alpha$ be the least ordinal such that for some $s$ we have: $Ord(s) = \alpha$, $(N, \twk) \models \tau(\psi(s, \varphi))$ but $\varphi \notin Th(N)$. By considering all possible forms of the formula $\varphi$ we note that there has to be an $s' \tria s$ such that for some sentence $\chi \notin Th(N)$ $(N, \twk) \models \tau(\psi(s', \chi))$.\footnote{It is easy to observe that $\varphi$ cannot have the form `$t = s$', therefore $\varphi$ has to be of the form $\neg \neg \chi$, $\xi \circ \chi$, $\neg (\xi \circ \chi)$, $Qv \chi$ or $\neg Qv \chi$.} But then $Ord(s') < \alpha$, which contradicts the choice of $\alpha$. 
\end{proof}\\
The next lemma establishes the monotonicity of $\psi$ under $\tau$.
\begin{lemma}\label{monoton}
$\forall s s' \in D^{WK} \Big( (Ord(s) \leq Ord(s')) \rightarrow \forall \varphi \in \sentt \big( (N, \twk) \models \tau(\psi(s, \varphi)) \rightarrow (N, \twk) \models \tau(\psi(s', \varphi)) \big) \Big)$.
\end{lemma}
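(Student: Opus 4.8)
The plan is to prove the statement by transfinite induction on $Ord(s)$, unfolding each $\tau(\psi(s,\varphi))$ through Corollary \ref{psiwb} and pushing every witnessing predecessor of $s$ across to a suitable immediate predecessor of $s'$. So fix an ordinal $\alpha$ and assume the claim whenever the source sentence has ordinal $<\alpha$; that is, assume that for all $\hat s,\hat s'\in D^{WK}$ with $Ord(\hat s)<\alpha$ and $Ord(\hat s)\le Ord(\hat s')$, and all $\varphi\in\sentt$, $(N,\twk)\models\tau(\psi(\hat s,\varphi))$ implies $(N,\twk)\models\tau(\psi(\hat s',\varphi))$. Now take $s$ with $Ord(s)=\alpha$, take $s'\in D^{WK}$ with $\alpha\le Ord(s')$, fix $\varphi$, and assume $(N,\twk)\models\tau(\psi(s,\varphi))$. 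By Corollary \ref{psiwb} this is equivalent to $F^{\tau}(s,\varphi)$, and the goal is to establish $F^{\tau}(s',\varphi)$. I would proceed by cases on the form of $\varphi$ dictated by the disjuncts of Definition \ref{psi}.

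The arithmetical disjunct ($\varphi\in\sentpa\wedge\tau(\varphi)$) and the disjunct $\neg\sentt(val(t))$ in the $\neg T(t)$-clause do not mention $s$ at all; since $s'\in D^{WK}$ is a sentence, these transfer to $s'$ verbatim. This in particular settles the base case $\alpha=0$, since then $s$ has no $\vartriangleleft$-predecessor and only these $s$-free disjuncts can hold. Every remaining disjunct asserts the existence of one immediate predecessor $s_1\vartriangleleft s$ (and possibly a second $s_2\vartriangleleft s$) together with statements $\tau(\psi(s_i,\chi))$ for structurally relevant subsentences $\chi$ --- $\chi=val(t)$ in the $T(t)$-clause, the two components in the $\circ$-clause, $\chi=\theta(a)$ under a quantifier, and so on. The uniform engine is this: for such a predecessor $s_1\vartriangleleft s$ we have $Ord(s_1)<\alpha\le Ord(s')$, so by Observation \ref{compord}(b) there is a descendant of $s'$ realising $Ord(s_1)$, and by Observation \ref{compord}(a) this descendant lies $\etrias$ below some immediate predecessor $\tilde s\vartriangleleft s'$, whence $Ord(\tilde s)\ge Ord(s_1)$. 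Since $\tilde s\vartriangleleft s'$ and $s'$ is well-founded, $\tilde s$ is well-founded, hence WK-grounded by Lemma \ref{grwf}. As $Ord(s_1)<\alpha$ and $Ord(s_1)\le Ord(\tilde s)$, the inductive hypothesis applies and yields $\tau(\psi(s_1,\chi))\Rightarrow\tau(\psi(\tilde s,\chi))$.

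It then remains to reassemble the pushed-forward witnesses into the matching disjunct of $F^{\tau}(s',\varphi)$, and the only thing to check is that the outer logical operation respects the implications just obtained. For the connective clauses this is the monotonicity of $\wedge$ and $\vee$ and of their duals $\circ_d$; for the quantifier clauses it is the fact that the single witnessing $s_1$ sits outside the quantifier, so that from $Qa\,\tau(\psi(s_1,\theta(a)))$ and the pointwise-in-$a$ implication one obtains $Qa\,\tau(\psi(\tilde s,\theta(a)))$, and likewise for $Q_d$. In each case the chosen predecessor(s) $\tilde s\vartriangleleft s'$ together with the transferred statements verify the corresponding disjunct, giving $F^{\tau}(s',\varphi)$ and hence $(N,\twk)\models\tau(\psi(s',\varphi))$.

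The step deserving the most care --- and the only real obstacle --- is the predecessor-matching: one must pass from a witness $s_1\vartriangleleft s$ to an \emph{immediate} $\vartriangleleft$-predecessor of $s'$ (not merely a $\trias$-descendant, since Definition \ref{psi} recurses only along $\vartriangleleft$) whose ordinal is still large enough for the inductive hypothesis to fire. This is exactly what the combination of Observation \ref{compord}(b) and Observation \ref{compord}(a) secures, and it relies essentially on immediate predecessors strictly lowering $Ord$, which is also what licenses the induction on $Ord(s)$ in the first place. The remaining case analysis is routine and can be abbreviated, since all eight clauses are handled by the single engine described above.
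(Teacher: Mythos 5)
Your proof is correct and follows essentially the same strategy as the paper's: transfinite induction plus a case analysis on the disjuncts of Definition \ref{psi}, with the key step of relocating each witness $s_1 \vartriangleleft s$ to an immediate predecessor of $s'$ of sufficiently large ordinal via Observation \ref{compord}(b) followed by \ref{compord}(a). The only (harmless) differences are that you induct on $Ord(s)$ rather than on $Ord(s')$, which lets you apply the inductive hypothesis once per witness where the paper applies it twice (first to an $s_2 \vartriangleleft^* s'$ of equal ordinal, then up to an immediate predecessor $s_3$), and that you handle the arithmetical disjunct directly as $s$-free instead of invoking Observation \ref{ari}.
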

\begin{proof}
Fix an ordinal $\alpha$ and assume that the lemma is true below $\alpha$, that is:
\begin{center}
$\forall \gamma < \alpha \forall s s' \in D^{WK} \Big( (Ord(s) \leq \gamma \wedge Ord(s') = \gamma) \rightarrow$ \\ $\forall \varphi \in \sentt \big( (N, \twk) \models \tau(\psi(s, \varphi)) \rightarrow (N, \twk) \models \tau(\psi(s', \varphi)) \big) \Big)$.
\end{center}
We claim that $\forall s s' \in D^{WK} \Big( (Ord(s) \leq \alpha\ \wedge Ord(s') = \alpha) \rightarrow \forall \varphi \in \sentt \big( (N, \twk) \models \tau(\psi(s, \varphi)) \rightarrow (N, \twk) \models \tau(\psi(s', \varphi)) \big) \Big)$.

Fix $s$ and $s' \in D^{WK}$ such that $Ord(s) \leq \alpha$ and $Ord(s') = \alpha$. Assuming that $(N, \twk) \models \tau(\psi(s, \varphi))$, we are going to show that $(N, \twk) \models \tau(\psi(s', \varphi))$. The proof proceeds by considering cases. For illustration we present below three of them; the reasoning in the remaining cases is very similar.

Case 1: $\varphi \in \sentpa$. Then the conclusion follows by Observation \ref{ari}.

Case 2: $\varphi = (\neg) T(t)$. Then $(N, \twk) \models \exists s_1 \tria s~\tau(\psi(s_1, (\neg) val(t)))$. Fixing such an $s_1$, we observe that $Ord(s_1) < \alpha$. By Observation \ref{compord}(b), choose $s_2 \trias s'$ such that $Ord(s_2) = Ord(s_1)$. Then by the inductive assumption we obtain $(N, \twk) \models \tau(\psi(s_2, (\neg) val(t)))$. By Observation \ref{compord}(a), choose $s_3 \tria s'$ such that $s_2 \etrias s_3$. Then again by the inductive assumption we obtain $(N, \twk) \models \tau(\psi(s_3, (\neg) val(t)))$. Since $s_3 \tria s'$, we finally obtain $(N, \twk) \models \tau(\psi(s', (\neg) T(t)))$ by the definition of $\psi$.

Case 3:  $\varphi = Qv \chi$. Then $(N, \twk) \models \exists s_1 \trias s~Qa~\tau(\psi(s_1, \chi(a)))$. Fix such an $s_1$. Applying Observation \ref{compord}, choose $s_2$ and $s_3$ such that $Ord(s_2) = Ord(s_1)$, $s_2 \trias s'$, $s_2 \etrias s_3$, $s_3 \tria s'$. By the inductive assumption we obtain:
\begin{itemize}
\item[] $(N, \twk) \models Qa~\tau(\psi(s_2, \chi(a)))$,
\item[] $(N, \twk) \models Qa~\tau(\psi(s_3, \chi(a)))$.
\end{itemize}
Since $s_3 \tria s'$, we obtain $(N, \twk) \models \tau(\psi(s', Qv \chi))$ by the definition of $\psi$.
\end{proof}

The lemma below establishes a connection between the behaviour of $\psi$ under $\tau$ and Kripke's hierarchy from Definition \ref{lfp}.
\begin{lemma}\label{leastfp} 
$\forall s \forall \alpha [Ord(s) = \alpha \rightarrow \forall \varphi \in \sentt \big( \varphi \in T_{\alpha}^{SK} \equiv (N, \twk) \models \tau(\psi(s, \varphi)) \big)]$.
\end{lemma}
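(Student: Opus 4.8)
The plan is to argue by transfinite induction on $\alpha = Ord(s)$. Fix $s$ with $Ord(s) = \alpha$; note that $s$ is then well-founded, hence $WK$-grounded by Lemma \ref{grwf}, so $s \in D^{WK}$ and all of Observation \ref{ari}, Lemma \ref{monoton} and Observation \ref{compord} are applicable to it. I would prove the biconditional $\varphi \in T^{SK}_{\alpha} \equiv (N,\twk)\models\tau(\psi(s,\varphi))$ for \emph{all} $\varphi \in \sentt$ at once, splitting into cases according to the syntactic form of $\varphi$ so as to match, clause by clause, the definition of the jump $J^{SK}$ (Definition \ref{skjump}) on the left and the definition of $\psi$ (Definition \ref{psi}) on the right. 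A key point worth stressing is that no separate structural induction on $\varphi$ is needed: every recursive disjunct of $\psi$ refers only to $\tria$-predecessors $s' \tria s$, and each such $s'$ satisfies $Ord(s') < \alpha$, so the inductive hypothesis on $\alpha$ already covers the subformula computations. The whole argument is therefore a single ordinal induction.

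The two unfoldings are mechanical. On the right, Corollary \ref{psiwb} lets me replace $\tau(\psi(s,\varphi))$ by $F^{\tau}(s,\varphi)$ and then read off the disjunct of Definition \ref{psi} corresponding to the shape of $\varphi$. On the left, I would unfold $\varphi \in T^{SK}_{\alpha}$ through the successor and limit clauses of Definition \ref{lfp} together with the matching clause of $J^{SK}$, using monotonicity of the Strong Kleene hierarchy to rephrase membership uniformly as ``$\exists \beta < \alpha$ (the relevant subformula lies in $T^{SK}_{\beta}$)''. The atomic and base cases are then immediate: for arithmetical $\varphi$ (and for $\varphi = \ulcorner t = s \urcorner$) the equivalence is exactly Observation \ref{ari}, since $Th(N) = T^{SK}_{0}$; the cases $\varphi = \ulcorner T(t)\urcorner$ and $\varphi = \ulcorner \neg T(t)\urcorner$ are read straight off the two definitions, the non-sentence subclause being handled by inspection.

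The crux is the \emph{bridge} between ordinals below $\alpha$ and $\tria$-predecessors of $s$. In each recursive case the problem reduces to an equivalence of the form $\big(\exists \beta < \alpha\ \chi \in T^{SK}_{\beta}\big) \Leftrightarrow \big(\exists s' \tria s\ \tau(\psi(s',\chi))\big)$, and the analogous $Qa$/$Qt$ versions for quantifiers. For the direction from left to right, given $\chi \in T^{SK}_{\beta}$ with $\beta < \alpha$, I would use Observation \ref{compord}(b) to pick $s_2 \trias s$ with $Ord(s_2) = \beta$, then Observation \ref{compord}(a) to pass to a genuine predecessor $s_1 \tria s$ with $s_2 \etrias s_1$; the inductive hypothesis applied at rank $\beta$ gives $\tau(\psi(s_2,\chi))$, and Lemma \ref{monoton} (since $Ord(s_2) \leq Ord(s_1)$) transfers this to $\tau(\psi(s_1,\chi))$, which is the required witness. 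Conversely, from $s' \tria s$ with $\tau(\psi(s',\chi))$ one has $Ord(s') < \alpha$, and the inductive hypothesis yields $\chi \in T^{SK}_{Ord(s')}$, so some $\beta < \alpha$ works. Combining the two subformula bridges across the connective or quantifier completes each case, with external monotonicity of the Strong Kleene hierarchy used to coalesce the two witness levels in the $\wedge$-clause and to secure a single level serving every constant term in the $\forall$-clause.

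The step I expect to be the main obstacle is precisely the quantifier cases, where the unbounded form $\exists s' \tria s\, Qa$ of Definition \ref{psi} must be matched against the $Qt\,\exists\beta<\alpha$ form extracted from $J^{SK}$; the delicate point for $\forall$ is to produce a \emph{single} predecessor $s'$ (equivalently a single level $\beta$) that works uniformly for all closed terms. This is exactly what the combination of Observation \ref{compord}, Lemma \ref{monoton} and monotonicity of the hierarchy is designed to supply, so once those are invoked the difficulty is organizational rather than conceptual. Finally, the base case $\alpha = 0$ deserves a separate remark: there $s$ has no $\tria$-predecessor, every recursive disjunct of $\psi$ is vacuously false, and the equivalence collapses to the arithmetical case already governed by Observation \ref{ari}.
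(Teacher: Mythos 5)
Your plan follows essentially the same route as the paper's proof: a single transfinite induction on $\alpha$ with a case split on the syntactic form of $\varphi$, using Observation \ref{ari} for the arithmetical/base case, Corollary \ref{psiwb} to unfold $\tau(\psi(s,\varphi))$, and the combination of Observation \ref{compord} with Lemma \ref{monoton} to bridge between ordinal levels and $\tria$-predecessors of $s$ (exactly the paper's treatment of the successor and limit steps). The argument is correct, and the point you flag as the main obstacle is handled in the paper by the same tools you name.
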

\begin{proof}
Assume that $\forall \beta < \alpha \forall s [Ord(s) = \beta \rightarrow \forall \varphi \in \sentt \big( \varphi \in T_{\beta}^{SK} \equiv (N, \twk) \models \tau(\psi(s, \varphi)) \big)]$. We claim that the same holds also for $\alpha$. 

If $\alpha = Ord(s) = 0$, then $\varphi \in T_{\alpha}^{SK}$ iff $(N, \twk) \models \tau(\psi(s, \varphi))$ iff $\varphi \in Th(N)$. If $\alpha = \beta + 1$, the proof proceeds by analysing possible forms of $\varphi$. Thus, e.g.:

Case 1. $\varphi = T(t)$. Assuming that $\varphi \in T_{\beta +1}^{SK}$, we obtain: $val(t) \in T_{\beta}^{SK}$. Let $s' \tria s$ be such that $Ord(s') = \beta$. Then by the inductive assumption we have $(N, \twk) \models \tau(\psi(s', \varphi))$ and therefore $(N, \twk) \models \tau(\psi(s, \varphi))$. The proof of the opposite implication is very similar.

Case 2. $\varphi = Qv \chi(v)$. Assuming that $\varphi \in T_{\beta +1}^{SK}$, we obtain: $Qa (\chi(a) \in T_{\beta}^{SK})$. As before, choosing $s' \tria s$ such that $Ord(s') = \beta$ we obtain:  $(N, \twk) \models Qa \tau(\psi(s', \chi(a)))$ and therefore $(N, \twk) \models \tau(\psi(s', Qv\chi(v)))$. Again, the proof of the opposite implication is very similar.
We leave the other cases to the reader.

If $\alpha$ is a limit ordinal, we argue as follows. Assuming that $\varphi \in T_{\alpha}^{SK}$, take $\beta < \alpha$ such that $\varphi \in T_{\beta}^{SK}$. By Observation \ref{compord}(b), take $s' \trias s$ such that $Ord(s') = \beta$. Then by the inductive assumption $(N, \twk) \models \tau(\psi(s', \varphi))$ and therefore by Lemma \ref{monoton} $(N, \twk) \models \tau(\psi(s, \varphi))$. For the opposite implication, assuming that $(N, \twk) \models \tau(\psi(s, \varphi))$, we consider possible forms of $\varphi$.\footnote{Namely, we may assume that $\varphi$ has one of the forms $T(t)$, $\neg T(t)$, $\neg \neg \chi$, $\chi \circ \xi$, $\neg (\chi \circ \xi )$, $Qv \chi(v)$ or $\neg Qv \chi(v)$.} The proof then proceeds by cases. Since the argument in each case is very similar, we restrict ourselves to giving one example, leaving the rest for the reader to verify.

Thus, assume that $\varphi = \neg T(t)$. Then $(N, \twk) \models \neg \sentt(val(t)) \vee \exists s' \tria s~\tau(\psi (s', \neg val(t)))$. If the first disjunct holds, it follows immediately that $\varphi \in T_{\alpha}^{SK}$. If the second, choose an $s'$ with the indicated property and let $Ord(s') = \beta$. Since $\beta < \alpha$, by the inductive assumption we obtain: $\neg val(t) \in T_{\beta}^{SK}$ and since $\alpha$ is a limit ordinal, we can take an ordinal $\gamma$ and an $s'' \tria s$ such that $Ord(s'') = \gamma$, $\gamma > \beta$ and $\gamma < \alpha$. Then $\neg T(t) \in T_{\gamma}^{SK}$ and thus $\neg T(t) \in T_{\alpha}^{SK}$.
\end{proof}

\begin{corollary}\label{taulfp} If $\tau(x)$ is an arbitrary $KF$ truth predicate in $(N, \twk)$, then there is a formula $\tau'(x) \in \lt$ which defines in $(N, \twk)$ the set of sentences determined as true in the least fixed point model of $KF$.
\end{corollary}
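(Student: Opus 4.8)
The plan is to obtain $\tau'$ by existentially quantifying, over the well-founded sentences $s$, the formula $\tau(\psi(s,x))$ that Lemma \ref{leastfp} attaches to the level $T^{SK}_{Ord(s)}$. The one delicate ingredient is that the set of well-founded sentences is in general not first-order definable in $(N,\twk)$; but it need not be, because well-foundedness can be read off the truth predicate already present in the model. Indeed, by Lemma \ref{grwf} a sentence $s$ is well-founded iff it is WK-grounded, and since $T$ is interpreted as $\twk$ in $(N,\twk)$, this holds iff $(N,\twk)\models T(s)\vee T(\neg s)$, i.e. iff $(N,\twk)\models s\in D$. Accordingly I would put
\begin{center}
$\tau'(x):=\exists s\,\big[\,\sentt(s)\wedge(T(s)\vee T(\neg s))\wedge\tau(\psi(s,x))\,\big]$,
\end{center}
and claim that $\{x:(N,\twk)\models\tau'(x)\}$ is exactly $\tsk$, the extension of truth in the least fixed-point model of $KF$.

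For the verification, recall first that by Theorem \ref{theclord}(a) the Strong Kleene construction closes at $\omck$, so $\tsk=\bigcup_{\alpha<\omck}T^{SK}_{\alpha}$ and $T^{SK}_{\gamma}\subseteq\tsk$ for every $\gamma$. Fix $x$. For the direction from right to left, suppose $x\in\tsk$, say $x\in T^{SK}_{\alpha}$ with $\alpha<\omck$; I would produce a well-founded $s$ with $Ord(s)\geq\alpha$, so that $x\in T^{SK}_{Ord(s)}$ since the hierarchy is increasing, whence $(N,\twk)\models\tau(\psi(s,x))$ by Lemma \ref{leastfp}. As $s$ is well-founded it satisfies $s\in D$ in $(N,\twk)$, so $(N,\twk)\models\tau'(x)$. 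For the converse, if $(N,\twk)\models\tau'(x)$ is witnessed by some $s$ with $(N,\twk)\models\sentt(s)\wedge(T(s)\vee T(\neg s))$ and $(N,\twk)\models\tau(\psi(s,x))$, then the first conjunct makes $s$ WK-grounded, hence well-founded (Lemma \ref{grwf}), so $Ord(s)$ is defined and Lemma \ref{leastfp} yields $x\in T^{SK}_{Ord(s)}\subseteq\tsk$. (For $x\notin\sentt$ both sides are false, since no disjunct in Definition \ref{psi} can then hold.)

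The step I expect to be the crux is the one in the right-to-left direction that demands, for each $\alpha<\omck$, a well-founded sentence of ordinal at least $\alpha$; equivalently, that the ordinals realised by well-founded sentences are cofinal in $\omck$. This is exactly where the Case 2 hypothesis $ClOrd_{(N,\twk)}=\omck$ is indispensable: since the Weak Kleene hierarchy is increasing and stabilises only at $\omck$, for each $\alpha<\omck$ there is a sentence $\varphi\in T^{WK}_{\alpha+1}\setminus T^{WK}_{\alpha}$; such $\varphi$ lies in $\twk$, hence is WK-grounded and therefore well-founded, and by Lemma \ref{bounds}(a) it satisfies $Ord(\varphi)\geq\alpha$. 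Thus the ordinals of well-founded sentences climb all the way to $\omck$, matching the closure ordinal of the Strong Kleene construction supplied by Theorem \ref{theclord}(a), and this is precisely what lets a single existential quantifier over well-founded $s$ sweep out all of $\tsk$ rather than merely an initial segment. (By contrast, in Case 1 these ordinals are bounded below $\omck$ by Observation \ref{ordpsi}, which is why that case required the entirely different $\Delta^1_1$ route.)
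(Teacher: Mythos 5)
Your proposal is correct and follows essentially the same route as the paper: the paper defines $\tau'(x)$ as `$\exists s \in D~\tau(\psi(s, x))$' (identical to your formula, since `$s \in D$' abbreviates `$T(s) \vee T(\neg s)$') and appeals to Lemma \ref{leastfp} together with the observation that the ordinals of WK-grounded sentences are cofinal in $\omck$. Your justification of that cofinality claim via the Case 2 hypothesis and Lemma \ref{bounds}(a) is exactly the argument the paper leaves implicit.
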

\begin{proof}
Define $\tau'(x)$ as `$\exists s \in D~\tau(\psi(s, x))$'. The result follows immediately from Lemma \ref{leastfp} and the observation that the ordinals of elements of $D^{WK}$ are arbitrarily large below $\omega_1^{CK}$.
\end{proof}

For the proof of Lemma \ref{main2}, we assume that we have at our disposal a  $KF$ truth predicate. Our task is to show the existence of a $KF$ truth predicate which can `recognize' Weak Kleene groundedness in the sense of conditions (a) and (b) of Lemma \ref{main1}. For this we need to define a formula $G(x)$ expressing groundedness. We introduce such a formula below.

\begin{definition}\label{theta} Let $\varphi(x, y)$ be defined as $\exists z \tria x \exists s \big( s = sub(y, \ulcorner x \urcorner, z) \wedge T(s) \big)$. Define $\theta(x)$ as the diagonal formula for $\varphi(x, y)$ (for the details of the construction, see Definition \ref{diag}).
\end{definition}
In particular, $PAT$ proves that:
\begin{center}
$\theta(x) \equiv \exists z \tria x~T(\theta(z))$.
\end{center}
Now, our $G(x)$ is defined as $\neg \theta(x)$. Let us start with the following observation. 

\begin{observation}\label{thetagr} For every $\psi \in \sentt$, if $\psi$ is WK-grounded, then $(N, \tsk) \models T(\neg \theta(\psi))$.
\end{observation}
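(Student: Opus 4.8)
The plan is to prove the statement by well-founded induction along the relation $\tria$. First I would use Lemma \ref{grwf} to replace the hypothesis ``$\psi$ is WK-grounded'' by ``$\psi$ is well-founded''; by Definition \ref{tria} this means that $\tria$ is well-founded on $\{x : x \etrias \psi\}$. Since $\{x : x \etrias z\} \subseteq \{x : x \etrias \psi\}$ whenever $z \etrias \psi$, every $z \tria \psi$ is itself well-founded, so an induction along $\tria$ is legitimate, with induction hypothesis: $(N,\tsk) \models T(\neg \theta(z))$ for every $z \tria \psi$. I would stress that it is $\tria$-induction, and \emph{not} induction on $Ord(\psi)$, that is needed here: by Definition \ref{ord} one only has $Ord(z) \le Ord(\psi)$ for $z \tria \psi$ (the rank can stall), so $Ord$ need not decrease strictly along $\tria$.

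Second, I would unwind $\theta$ using its diagonal character. Since $\theta(x)$ is by Definition \ref{theta} the diagonal formula for $\varphi(x,y)$, Lemma \ref{diagkf}(b) gives $KF \vdash T(\neg \theta(x)) \equiv T(\neg \varphi(x, \overline{\ulcorner \theta(x) \urcorner}))$; carrying out (provably in $PA$) the substitution encoded by $sub$ inside $\varphi$, exactly as in the proof of Lemma \ref{diagkf} and in accordance with the $PAT$-provable diagonal equation $\theta(x) \equiv \exists z \tria x~T(\theta(z))$, the right-hand side becomes $T\big(\neg \exists z \tria \psi~T(\theta(z))\big)$ when evaluated at $\psi$. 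As $(N,\tsk) \models KF$, this equivalence holds in the Strong Kleene model, so it suffices to establish $(N,\tsk) \models T\big(\neg \exists z \tria \psi~T(\theta(z))\big)$.

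Third, I would evaluate this sentence compositionally, using that the Strong Kleene clauses {\scshape kf1--kf13} hold in $(N,\tsk)$. Writing the body as $z \tria \psi \wedge T(\theta(z))$, the clause for negated existentials reduces the goal to showing, for every constant term $t$, that $(N,\tsk) \models T\big(\neg(t \tria \psi \wedge T(\theta(t)))\big)$. For $t$ with $t \not\tria \psi$ this holds because the arithmetical conjunct $t \tria \psi$ is Strong Kleene false, so by the clause for negated conjunctions the negation is true. For $t$ with $t \tria \psi$ the same clause discards the (false) first disjunct and leaves $T(\neg T(\theta(t)))$, which by the clause for $\neg T$ amounts to $\neg \theta(t) \in \tsk$, i.e.\ to $(N,\tsk) \models T(\neg \theta(t))$ — precisely the induction hypothesis for $t \tria \psi$. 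This closes the induction, the minimal elements being covered by the vacuous instance of the quantifier, and yields $(N,\tsk) \models T(\neg \theta(\psi))$.

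I expect the main obstacle to be bookkeeping rather than conceptual: correctly computing the substitution inside $\varphi(\psi, \overline{\ulcorner \theta(\psi) \urcorner})$ so that Lemma \ref{diagkf}(b) really delivers the clean form $\neg \exists z \tria \psi~T(\theta(z))$, and then verifying that the Strong Kleene clauses for the negated bounded existential behave as claimed in the mixed case where the arithmetical guard $t \tria \psi$ is false while $T(\theta(t))$ may be undefined — here one relies on the fact that in Strong Kleene a conjunction with a false conjunct is false regardless of the other conjunct. Everything else is a routine application of the results already established.
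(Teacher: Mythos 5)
Your proof is correct and follows essentially the same route as the paper's: unwind $\theta$ via Lemma \ref{diagkf}(b) to $T(\neg \exists z \tria \psi~T(\theta(z)))$ and then reduce, via the Strong Kleene clauses for negated existentials and conjunctions, to the induction hypothesis at the $\tria$-predecessors of $\psi$. The only difference is that the paper runs the induction on $Ord(\psi)$ rather than directly along $\tria$; your variant is marginally more robust, since with Definition \ref{ord} read literally the rank can indeed stall along $\tria$ (the intended reading, as the proof of Lemma \ref{bounds}(a) shows, is $\sup\{Ord(x)+1 : x \tria \psi\}$), but the substance of the argument is identical.
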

\begin{proof} The proof proceeds by ordinal induction. Assume that for every $\beta < \alpha$ and for every $\psi \in \sentt$, if $\psi$ is WK-grounded and $Ord(\psi) = \beta$, then $(N, \tsk) \models T(\neg \theta(\psi))$. Let $\psi$ be WK-grounded such that $Ord(\psi)= \alpha$. We claim that $(N, \tsk) \models T(\neg \theta(\psi))$.

By Lemma \ref{diagkf}(b), it is enough to demonstrate that $(N, \tsk) \models T(\neg \exists z \tria \psi~T(\theta(z)))$, which in turn is equivalent in $(N, \tsk)$ to `$\forall z \big( z \tria \psi \rightarrow T(\neg \theta(z)) \big)$'.

Fixing $z \tria \psi$, we notice that $z$ is WK-grounded and $Ord(z) <  \alpha$. Therefore by the inductive assumption  $(N, \tsk) \models T(\neg \theta(z))$ and thus the proof is finished.
\end{proof}

At this moment we see that already the least fixed-point model of $KF$ is able to recognize Weak Kleene groundedness, but only in a restricted sense: if the sentence $\psi$ is WK-grounded, then the model will classify $G(\psi)$ - that is, $\neg \theta(\psi)$ - as determinately true. However, in case of $\psi$ being WK-ungrounded, the least fixed-point model of $KF$ is not able to recognize $G(\psi)$ as determinately false. In effect, the condition (b) from Lemma \ref{main1} is still not satisfied. Below we are going to show how to remedy this defect. Let us start with the following definition.

\begin{definition}\label{thetahier} \hfil
\begin{itemize}
\item $T^{\theta}_0 = Th(N) \cup \{\theta(t): t \in Tm^c \wedge val(t)$ is WK-ungrounded$\}$,
\item $T^{\theta}_{\alpha +1} = J^{SK}(T^{\theta}_{\alpha}) \cup \{\theta(t): t \in Tm^c \wedge val(t)$ is WK-ungrounded$\}$,
\item $T^{\theta}_{\lambda} = \underset{\alpha < \lambda}{\bigcup} T^{\theta}_{\alpha}$,
\item $T^{\theta}$ is $T^{\theta}_{\kappa}$ for the least $\kappa$ such that $T^{\theta}_{\kappa} = T^{\theta}_{\kappa + 1}$.
\end{itemize}
\end{definition}

The definition closely resembles the usual construction of the Strong Kleene fixed-point model. The only difference is that we explicitly add the information that for all WK-ungrounded sentences $\psi$, $\theta(\psi)$ (in other words, $\neg G(\psi)$) will belong to the interpretation of the truth predicate. Note that for technical reasons, the set $\{\theta(t): t \in Tm^c \wedge val(t)$ is WK-ungrounded$\}$ is added in the definition also at successor levels. This is done in order to guarantee the full monotonicity of the construction.\footnote{Defining $T^{\theta}_{\alpha +1}$ as $J^{SK}(T^{\theta}_{\alpha})$ would bring a minor technical complication: given that $\theta(t) \in T^{\theta}_0$, why should it belong also to $T^{\theta}_1$? As defined earlier (cf. Definitions \ref{theta} and \ref{diag}), $\theta(t)$ is the formula:\vspace{2pt}
\centerline{$\exists ab [a = name(m) \wedge b = sub(m, \ulcorner y \urcorner, a) \wedge \exists z \tria t \exists s \big( s = sub(b, \ulcorner x \urcorner, z) \wedge T(s) \big)]$.}\vspace{2pt}
Clearly, already $T^{\theta}_0$ will contain witnessing statements (with fixed $a$, $b$, $z$ and $s$) for the arithmetical part of this existential formula. In other words, in  $T^{\theta}_0$ we will have (for fixed $a$, $b$, $z$ and $s$) $a = name(m) \wedge b = sub(m, \ulcorner y \urcorner, a) \wedge z \tria t \wedge s = sub(b, \ulcorner x \urcorner, z)$, with $s$ also belonging to $T^{\theta}_0$ and identical to $\theta(z)$. However, this in itself is not enough to guarantee that the Strong Kleene jump applied to $T^{\theta}_0$ will produce $\theta(t)$.}

The properties of $T^{\theta}$ are encapsulated in the following observation.

\begin{observation}\label{thetakf} $(N, T^{\theta}) \models KF$ and for every $\psi \in \sentt$:
\begin{itemize}
\item[(a)] $\psi$ is WK-grounded iff $(N, T^{\theta}) \models T(\neg \theta(\psi))$,
\item[(b)] $\psi$ is WK-ungrounded iff $(N, T^{\theta}) \models T(\theta(\psi))$.
\end{itemize}
\end{observation}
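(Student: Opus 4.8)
The plan is to prove the three assertions in order, deriving the two biconditionals mostly from the modelhood claim together with the facts already established, and concentrating the real work in two places: verifying that $T^{\theta}$ is genuinely a Strong Kleene fixed point, and ruling out $\neg\theta(\psi)\in T^{\theta}$ when $\psi$ is WK-ungrounded. Throughout I write $A$ for $\{\theta(t): t\in Tm^c \wedge val(t)$ is WK-ungrounded$\}$ and $\Gamma(S)=J^{SK}(S)\cup A$; since $J^{SK}$ is monotone and $A$ is fixed, $\Gamma$ is monotone, so the hierarchy of Definition \ref{thetahier} is the least-fixed-point construction for $\Gamma$ and yields $T^{\theta}=J^{SK}(T^{\theta})\cup A$.

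First I would establish $(N,T^{\theta})\models KF$. The key step is to show $A\subseteq J^{SK}(T^{\theta})$, for then $T^{\theta}=J^{SK}(T^{\theta})$ is an ordinary Strong Kleene fixed point, and I may invoke the standard fact that in the standard model any such fixed point makes $KF$ true (see \cite[p.~201]{Hal11}). To see $A\subseteq J^{SK}(T^{\theta})$, recall that $PAT$ proves $\theta(x)\equiv\exists z\tria x\,T(\theta(z))$. If $val(t)$ is WK-ungrounded then, by Lemma \ref{grwf}, it is not well-founded, so it has a $\tria$-predecessor $z_1\tria val(t)$ that is itself ill-founded, hence WK-ungrounded; thus $\theta(z_1)\in A\subseteq T^{\theta}$. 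Unwinding the actual existential shape of $\theta(t)$ recorded in Definition \ref{theta}, this $z_1$ supplies the witness demanded by the jump clauses: the arithmetical conjuncts ($z_1\tria val(t)$ and the $name/sub$ identities) are true, hence in $Th(N)\subseteq T^{\theta}$, while the single $T$-conjunct $T(\theta(z_1))$ lies in $J^{SK}(T^{\theta})$ because $\theta(z_1)\in T^{\theta}$. Therefore $\theta(t)\in J^{SK}(T^{\theta})$, as required.

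Granting modelhood, the easy halves follow at once. For (b) from left to right, if $\psi$ is WK-ungrounded then $\theta(\psi)\in A\subseteq T^{\theta}$, i.e. $(N,T^{\theta})\models T(\theta(\psi))$. For (a) from left to right, if $\psi$ is WK-grounded then Observation \ref{thetagr} gives $\neg\theta(\psi)\in\tsk$, whence $\neg\theta(\psi)\in T^{\theta}$ by Fact \ref{basic}(ii). For the converse of (b), I argue contrapositively: if $\psi$ is WK-grounded then $\neg\theta(\psi)\in T^{\theta}$ (just shown) and $\theta(\psi)$ is SK-grounded (again by Observation \ref{thetagr}, since $\neg\theta(\psi)\in\tsk$), so Fact \ref{basic}(iii) forbids $\theta(\psi)\in T^{\theta}$; hence $(N,T^{\theta})\nvDash T(\theta(\psi))$.

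The one genuinely delicate direction is the converse of (a): from $\neg\theta(\psi)\in T^{\theta}$ infer that $\psi$ is WK-grounded. Here consistency is unavailable, because for WK-ungrounded $\psi$ the sentence $\theta(\psi)$ may itself be SK-ungrounded, so Fact \ref{basic}(iii) does not apply. Instead I would exploit the stagewise construction. For $\chi\in T^{\theta}$ let $\rho(\chi)$ be the least stage with $\chi\in T^{\theta}_{\rho(\chi)}$. Since $\neg\theta(w)$ is a negated existential and never a member of $A$, whenever $\neg\theta(w)\in T^{\theta}$ it must enter through a jump at a successor stage; peeling the quantifier layers of the negation of $\theta(w)$ (using the syntactic form from Definition \ref{theta} and the Strong Kleene clauses for $\neg\exists$, negated conjunctions, and $\neg T(t)$) forces, for every $z\tria w$, that $\neg\theta(z)\in T^{\theta}$ with $\rho(z)<\rho(w)$. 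Consequently $\rho$ is strictly $\tria$-decreasing on $\{w:\neg\theta(w)\in T^{\theta}\}$, which precludes any infinite $\tria$-descending chain below such a $w$; that is, every such $w$ is well-founded, hence WK-grounded by Lemma \ref{grwf}. Applying this to $w=\psi$ completes the converse of (a). I expect this rank argument — in particular the bookkeeping of how many jump stages are consumed in decomposing $\neg\theta(w)$ down to the inner $\neg\theta(z)$ — to be the main obstacle; the inclusion of $A$ at successor levels, flagged in the footnote to Definition \ref{thetahier}, is precisely what keeps the construction monotone so that these ranks are well defined.
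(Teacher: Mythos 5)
Your proof is correct; the paper in fact states Observation \ref{thetakf} without proof, so there is nothing to compare against, but your argument supplies exactly what is needed. You correctly isolate the two delicate points: first, that $A=\{\theta(t): val(t)$ WK-ungrounded$\}$ is absorbed by the jump at the fixed point (every ill-founded $val(t)$ has an ill-founded, hence WK-ungrounded, immediate $\tria$-predecessor $z_1$, and closure of $T^{\theta}$ under $J^{SK}$ lets the finitely many quantifier layers of $\theta(t)$ be rebuilt from $\theta(z_1)\in A$ and the true arithmetical witnesses), so $T^{\theta}=J^{SK}(T^{\theta})$ and modelhood follows from the standard characterisation of $KF$-models over $N$ as fixed points of the Strong Kleene jump; and second, that the converse of (a) cannot be obtained from Fact \ref{basic}(iii) (since $\theta(\psi)$ need not be SK-grounded for ungrounded $\psi$) and instead needs the stage-rank argument showing $\rho(\neg\theta(z))<\rho(\neg\theta(w))$ for all $z\tria w$, whence well-foundedness of $\psi$ via Lemma \ref{grwf}. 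The remaining directions follow from modelhood together with Observation \ref{thetagr} and Fact \ref{basic}(ii)--(iii) exactly as you say.
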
 

In view of Observation \ref{thetakf}, in the proof of Lemma \ref{main2} it is enough to demonstrate that if $(N, \twk)$ defines a truth predicate of $KF$, then it defines also $T^{\theta}$. \\

\noindent
\textbf{Proof of Lemma \ref{main2}} Assume that $(N, \twk)$ defines a truth predicate of $KF$. By Corollary \ref{taulfp}, let $\tau(x)$ be a formula defining in $(N, \twk)$ the set of sentences determined as true in the least fixed-point model of $KF$.

Define:

\begin{center}
$\tau^{Compl}(x):= \neg \tau(\neg x)$.
\end{center}

Then $\tau^{Compl}(x)$ defines in $(N, \twk)$ the set of sentences determined as true in the largest fixed-point model of $KF$.\footnote{This has been observed in a more general form by \cite{Can89}. Let {\scshape Con} be the statement `$\forall \psi \neg \big(T(\psi) \wedge T(\neg \psi) \big)$' and let {\scshape Compl} be the statement `$\forall \psi \big(T(\psi) \vee T(\neg \psi) \big)$'. The general observation is that given a model $(N, T)$ of $KF +${\scshape Con}, a model $(N, T')$ of $KF +${\scshape Compl} can be obtained by defining $T'$ as the set of those sentences of $\lt$ whose negations do not belong to $T$.} Observe that in the largest fixed-point model $(N, T^{Compl})$ of $KF$, all sentences $\neg \theta(t)$ for $val(t)$ being WK-grounded will belong to $T^{Compl}$ (this is because by Observation \ref{thetagr} they belong already to $\tsk$); moreover, in such cases by Fact \ref{basic}(iii) $\theta(t)$ does not belong to $T^{Compl}$. On the other hand, if $val(t)$ is not WK-grounded, then both $\theta(t)$ and $\neg \theta(t)$ belongs to $T^{Compl}$.

Copying the idea from Definition \ref{psi}, let $\psi_1$ be the diagonal formula satisfying the following condition:
\begin{align}
	\psi_1(s, x) \equiv &\ \ x \in \sentpa \wedge T(x)\nonumber\\
	\vee&\ \  x = \ulcorner \theta(t) \urcorner \wedge T(\theta(t))\nonumber\\
	\vee&\ \  x = \ulcorner T(t) \urcorner \wedge \exists s' \vartriangleleft s T(\psi_1(s', val(t))) \nonumber\\
	\vee&\ \  x = \ulcorner \neg T(t) \urcorner \wedge \big( \exists s' \vartriangleleft s T(\psi_1 (s', \neg val(t))) \vee \neg \sentt(val(t)) \big) \nonumber\\
	\vee&\ \  x = \ulcorner \neg \neg \varphi \urcorner \wedge  \exists s' \vartriangleleft s T ( \psi_1 (s', \varphi))  \nonumber\\
	\vee&\ \  x = \ulcorner \varphi \circ \chi \urcorner \wedge  \exists s's'' \vartriangleleft s \big( T ( \psi_1 (s', \varphi)) \circ T ( \psi_1 (s'', \chi)) \big) \nonumber\\
		\vee&\ \  x = \ulcorner \neg (\varphi \circ \chi ) \urcorner \wedge  \exists s's'' \vartriangleleft s \big( T ( \psi_1 (s', \neg \varphi)) \circ_d T ( \psi_1 (s'', \neg \chi)) \big) \nonumber\\
				\vee&\ \  x = \ulcorner Qv \varphi  \urcorner \wedge  \exists s' \vartriangleleft s Qa T ( \psi_1 (s', \varphi(a))) \nonumber\\
\vee&\ \  x = \ulcorner \neg Qv \varphi  \urcorner \wedge  \exists s' \vartriangleleft s Q_d a T ( \psi_1 (s', \neg \varphi(a))) \nonumber\
\end{align}
Finally, we define:
\begin{center}
$\tau^{\theta}(x):= \exists s \big(D(s) \wedge \tau^{Compl}(\psi_1(s, x)) \big)$.
\end{center}
Arguing exactly as in the proof of Lemma \ref{leastfp}, we prove that $\tau^{\theta}(x)$ defines $T^{\theta}$ in $(N, \twk)$. $\Box$

\bibliography{ref}

\end{document}